\documentclass[12pt]{article}
\usepackage{fancyhdr}
\usepackage{amssymb}
\usepackage{amsmath}
\usepackage{amsthm}
\usepackage{graphicx}
\usepackage{psfrag}
\usepackage{graphics}
\usepackage{amsthm}
\usepackage{xcolor}
\usepackage{hyperref}




\newcommand{\esp}{\hspace{0.06cm}}
\newcommand{\N}{\mathbb N}

\newcommand{\T}{\mathbb T}

\newcommand{\dist}{V_{\infty}}
\newcommand{\var}{\mathrm{var}}
\newcommand{\R}{\mathbb{R}}
\newcommand{\Z}{\mathbb Z}

\newcommand{\Diff}{\mathrm{Diff}}
\newcommand{\Fix}{\mathrm{Fix}}
\newcommand{\tgamma}{\tilde \gamma}
\newcommand{\eps}{\varepsilon}
\newcommand{\X}{\mathcal{X}}
\newcommand{\Y}{\mathcal{Y}}

\def\res#1{\mathbin{|}{}_{#1}}

\theoremstyle{theorem}

\newtheorem{thm}{Theorem}[section]
\newtheorem{thmintro}{Theorem}

\newtheorem{prop}[thm]{Proposition}
\newtheorem{cor}[thm]{Corollary}
\newtheorem{qs}[thm]{Question}
\newtheorem{lem}[thm]{Lemma}
\theoremstyle{definition}
\newtheorem{rem}[thm]{Remark}

\newtheorem{ex}[thm]{Example}

\hyphenation{nu-me-ra-ble} \setlength{\oddsidemargin}{-0.05in}

\setlength{\evensidemargin}{-0.05in}

\setlength{\textwidth}{6.5in}
\setlength{\topmargin}{-0.5in}
\setlength{\textheight}{8.7in}

\begin{document}

\pagestyle{fancy}
\rhead{(Arc-)connectedness for the space of smooth $\Z^d$ actions on 1-manifolds}
\lhead{}

\date{}
\author{H\'el\`ene Eynard-Bontemps \,\, \& \,\, Andr\'es Navas}

\title{(Arc-)connectedness for the space of $\mathbb{Z}^d$ actions\\ 
by $C^2$ diffeomorphisms on 1-manifolds}

\maketitle

\begin{abstract}

We deal with the general problem of connectedness for the space of $\mathbb{Z}^d$ actions by (orientation-preserving) 
diffeomorphisms of a compact 1-manifold. We prove two results. First, the space of $\mathbb{Z}^d$ actions by $C^2$ 
diffeomorphisms of the interval is connected. Second, any two $\mathbb{Z}^d$ actions by $C^2$ diffeomorphisms 
of a compact 1-manifold are connected by a continuous path of $C^{1+\mathrm{ac}}$ actions (where $C^{1+\mathrm{ac}}$ stands for diffeomorphisms 
with absolutely continuous derivative). The latter is the first result of arc-connectedness in regularity larger than $C^1$ in 
this setting. Actually, our proof applies to all $\mathbb{Z}^d$ actions by $C^{1+\mathrm{ac}}$ diffeomorphisms without 
elements with hyperbolic periodic points; the only obstruction to extend it to the general $C^{1+\mathrm{ac}}$ framework comes 
from the failure of the Sternberg-Yoccoz linearization theorem in class $C^{1+\mathrm{ac}}$. 
\end{abstract}

 Centralizers of diffeomorphisms can be viewed as infinitesimal symmetries of a given dynamical system. Starting with the seminal 
 work of Nancy Kopell \cite{kopell}, they have become a central object of study in dynamics. Perhaps the most relevant recent work 
 on this is~\cite{BCW}, which solves a longstanding question of Stephen Smale about centralizers of generic diffeomorphisms and, 
 also, is a wonderful ``window'' to enter into this huge subject. However, despite the effort of many people,  several natural 
problems remain unsolved. Here we deal with a longstanding question raised in the seventies 
by Harold Rosenberg \cite{mio-ICM}, that partly 
inspired the thesis of Jean-Christophe Yoccoz \cite{yoccoz}: {\em Is the space of (orientation-preserving) commuting circle 
diffeomorphisms locally arcwise connected~?} Quite surprisingly, this has revealed as a very difficult question, and only a 
few (and somewhat recent) results in this direction are known. These may be summarized as follows 
(in all that follows, all maps in consideration are assumed to preserve the orientation):

\begin{enumerate}

\item 
The space of $\mathbb{Z}^d$ actions by homeomorphisms of either the interval or the circle 
is arcwise connected (although this seems to be known to the specialists, no written proof 
exists in the literature; see Proposition \ref{connexe-cont} for a short argument);

\item The space of $\mathbb{Z}^d$ actions by $C^{\infty}$ diffeomorphisms of the interval is connected \cite{EB};

\item The space of $\mathbb{Z}^d$ actions by $C^1$ diffeomorphisms of either the circle or the interval is arcwise connected \cite{Na2};

\item Any two $\mathbb{Z}^d$ actions by $C^{1+\mathrm{ac}}$ circle diffeomorphisms may be connected by a path 
of $\mathbb{Z}^d$ actions provided one of the generators acts with an irrational rotation number \cite{Na1}. 
(Here and in all what follows,  $C^{1+\mathrm{ac}}$ stands for $C^1$ maps with absolutely continuous derivative.)

\end{enumerate}

It is worth stressing that results in this domain are very sensitive to different degrees of differentiability. 
The first of the two main results of this paper is a general arc-connectedness result in regularity 
$C^{1+\mathrm{ac}}$, which is the first of this type in regularity higher than $C^1$.

\begin{thmintro}
\label{t:A}
Any two $\mathbb{Z}^d$ actions by $C^2$ diffeomorphisms 
of a compact 1-manifold are connected by a continuous path of $C^{1+\mathrm{ac}}$ actions.
\end{thmintro}

The group $\mathrm{Diff}^{1+\mathrm{ac}}_+(V)$ has a natural topology 
(where $V$ denotes either the circle or the closed interval), namely, the one induced by the metric
$$d (f,g) := \left\| f - g \right\|_{C^1} + \left\| \frac{D^2f}{Df} - \frac{D^2g}{Dg} \right\|_{L^1}.$$
This extends to a metric on the larger group $\mathrm{Diff}^{1+\mathrm{bv}}_+(V)$ of $C^1$ 
diffeomorphisms whose derivative has bounded variation:
$$d(f,g) = \left\| f - g \right\|_{C^1} + \mathrm{var} (\log (Df) - \log (Dg)).$$
We do not deal with continuity issues  in the more general group of piecewise smooths maps, 
since it usually involves problems concerning group topology (see for instance \cite{mann}).

Most of our arguments actually work for $\mathbb{Z}^d$ actions on compact 1-manifolds by $C^{1+\mathrm{ac}}$ diffeomorphisms which 
are not necessarily $C^2$; they only fail when some group element has hyperbolic periodic points, and this is due to the failure of the 
Sternberg-Yoccoz linearization theorem in $C^{1+\mathrm{ac}}$ regularity.
 Our arguments do not give, however, any kind of arcwise 
connectedness in the $C^2$ topology. Actually, this seems to be a very hard problem. Nevertheless, using different techniques, for the case 
of the interval, we prove a general connectedness theorem, which is the analog of item 2. above for actions by $C^2$ diffeomorphisms.

\begin{thmintro}
\label{t:B}
The space of $\mathbb{Z}^d$ actions by $C^2$ diffeomorphisms of the closed interval is connected.
\end{thmintro}

Extending classical work of Joseph Plante 
and William Thurston \cite{PT}, it was proved in \cite{Navas-GAFA,Navas-Mex} that groups of $C^{1+\mathrm{bv}}$ diffeomorphisms 
of 1-manifolds with subexponential growth and, more generally, without free subsemigroups in two generators, are Abelian. Therefore, 
our Theorems \ref{t:A} and \ref{t:B} apply to them.

\vspace{0.1cm}

If $V$ denotes either the circle or the closed interval, actions as those concerned 
by Theorems \ref{t:A} and \ref{t:B} arise as holonomy representations of codimension-1 
foliations of $\T^d\times V$ transverse to the second factor (and tangent to the boundary if nonempty), where $\T^d$ 
denotes the $d$-dimensional torus. The previous statements can thus be translated in terms of foliations as follows:

\vspace{0.4cm}

\noindent{\bf Corollary.} {\em Any two codimension-1 foliations of class $C^2$ of $\T^d\times V$ transverse to the second 
factor (and tangent to the boundary if nonempty) can be connected by a path of $C^{1+\mathrm{ac}}$ foliations. Moreover, 
in the case of  $\T^d\times [0,1]$, they cannot be separated by disjoint open sets of such foliations.}


\section{Discussion and plan of the proofs}

\subsection{On the technique of proof of Theorem \ref{t:A}}
\label{continuous-case} 
Theorem \ref{t:A} concerns both the circle and the interval. However, part of the circle case has been already settled. Indeed, path 
connexion between any action for which a generator has irrational rotation number and the corresponding action by rotations  
follows from item 4. above, which essentially corresponds to \cite[Theorem B]{Na1}  (though some slight modifications in the proof are 
necessary). Moreover, the case where all generators  have a rational rotation number can be reduced to that of the interval by passing 
to a finite index subgroup (yet this reduction requires several extra arguments). Details will be provided later on.

Thus, our main contribution concerns the difficult case of the interval. In this situation, many fixed points in the interior may arise for the action. 
However, the restriction to each connected component of the complement of the set of fixed points is an action with no global fixed 
point. It is hence natural to first deal with (non necessarily faithful) actions of this type, and later check that certain paths of 
deformations supported on disjoint intervals fit nicely provided a good control for them can be ensured.

In case of absence of global fixed points in the interior, in virtue of the famous Kopell's Lemma \cite{kopell}, 
nontrivial elements actually have no fixed points in the interior, and a key role is played by the Mather invariant of these elements. Recall that this captures the 
failure of a diffeomorphism to arise from a $C^1$ vector field of the closed interval. (A review of this appears in \S \ref{section-review}, 
with proofs of new results in the Appendix I.) The discussion then splits into two different subcases. 

\vspace{0.3cm}

\noindent $\bullet$ {\bf The Mather invariants of group elements are trivial.} 

\vspace{0.2cm}

If the endpoints are parabolic fixed points, then we can deform the given action into the trivial one through conjugates. 
This uses the ideas developed in \cite{EN} based on the notion of asymptotic variation introduced in \cite{Na1}\footnote{This was called the {\em asymptotic distortion} in \cite{EN} and \cite{Na1}, 
but we think that asymptotic variation is a better terminology for the purposes of this work.}
. Indeed, 
triviality of the Mather invariant is equivalent to the vanishing of the asymptotic variation, which is the key (and 
necessary) ingredient for implementing the conjugacy argument inside the group of $C^{1+\mathrm{ac}}$ diffeomorphisms.

If one or both of the endpoints is hyperbolic, then it is natural to use the classical Sternberg linearization theorem (in Yoccoz' improved 
version) to transform the original action to one arising from a vector field for which the flow consists of diffeomorphisms that are affine 
close to these endpoints. The idea is then to locally deform the vector field to one which yields parabolic diffeomorphisms, so that 
we can apply the previous argument. This actually works, but requires a more subtle strategy because of the lack 
of a good control on the conjugators whenever the maps become less and less hyperbolic. More precisely, we use 
conjugacies from outside of the group of $C^1$ diffeomorphisms which nevertheless preserve the smooth structure of 
the original maps (this idea comes from \cite{Na2}). 

It is worth stressing that this use of Sternberg's theorem is the only issue where we need to assume that the original action 
is by $C^2$ diffeomorphisms (Sternberg's result is no longer valid for $C^{1+\mathrm{ac}}$ diffeomorphisms; see the Appendix II). All the other 
arguments are valid in $C^{1+\mathrm{ac}}$ regularity. In any case, the proof requires extending to $C^{1+\mathrm{bv}}$ diffeomorphisms the 
classical Szekeres construction of generating vector fields (that is, vector fields whose time-1 map is the underlying diffeomorphism),
as well as some results from \cite{EN} to this context. This is carried out in the Appendix I.

\vspace{0.3cm}

\noindent $\bullet$ {\bf The Mather invariant of a group element is nontrivial.} 

\vspace{0.2cm}

This assumption necessarily implies that the image group is isomorphic to $\mathbb{Z}$; see \S \ref{section-mather}. Denoting by $f$ its generator, 
one is tempted to just deform $f$ by a simple linear homotopy (of its graph), and simultaneously deform the whole action in a coherent way 
(given that every group element is nothing but a power of $f$). However, the major difficulty comes from that $f$ may have a very 
large $C^{1+\mathrm{bv}}$-norm even in the case where the norms of the generators are small. 
(Here and in what follows, by {\em $C^{1+\mathrm{bv}}$-norm} we mean the total variation of the logarithm of the derivative, 
which for a $C^{1+\mathrm{ac}}$ diffeomorphism $f$ will be referred to as the $C^{1+\mathrm{ac}}$-norm and corresponds to the $L^1$ 
norm of its affine derivative $D^2 f / D f$.)
This phenomenon is at the core of the classical  examples of Sergeraert \cite{sergeraert} 
(see \cite{eynard1,eynard2} for recent developments on this), and represents a major obstacle to deform a given action in a controlled way. 
To overcome this difficulty, the new key idea consists in using the equivariance properties of the asymptotic variation and conjugacies in 
order to first conjugate the original action into another one for which we can ensure that the norm of the corresponding (conjugate) 
diffeomorphism $f$ is small, and later proceed to the deformation by homotopy. Informally speaking, we first need to put the 
action in ``good coordinates'' so that the homotopy deformation behaves tamely.

\vspace{0.3cm}

We develop the arguments for each subcase above in the separate sections \S  \ref{section-trivial} and \S \ref{section-no-trivial}. 
The proof of Theorem \ref{t:A} is then concluded in \S \ref{section-todo}, where we carefully put all the pieces together. It is worth 
mentioning that this is not at all straightforward; in particular, several of the aforementioned estimates (as those arising 
in the case of a nontrivial Mather invariant) will be crucial at this step. 

\vspace{0.4cm}


\noindent{\bf The main idea: averaging actions.} 
The strategy of proof above may seem somewhat cryptic since it is described in technical terms. Nevertheless, we would like to stress the main idea, 
which consists (whenever possible) in conjugating the original action by a classical averaging procedure so that it becomes closer and closer 
to an action by isometries. In the present $C^{1+\mathrm{ac}}$ setting, this is achieved by using the affine derivative. For actions by $C^1$ 
diffeomorphisms, the same idea was implemented in \cite{Na2} via the logarithmic derivative $\log D (\cdot)$. For completeness of this work, 
below we give an elementary result in the continuous framework for which the proof uses the same strategy. (Compare 
\cite[Proposition (2.2), Chapitre VII]{herman}.)

\vspace{0.2cm}

\begin{prop} \label{connexe-cont}
The space of $\mathbb{Z}^d$ actions by homeomorphisms of either the interval or the circle is arcwise connected.
\end{prop}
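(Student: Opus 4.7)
The plan is to connect any $\mathbb{Z}^d$ action to the trivial action by an explicit continuous path of $\mathbb{Z}^d$ actions; by symmetry this suffices for arcwise connectedness. I would treat the interval and the circle $\clo$ by somewhat different devices, both implementing the averaging idea alluded to in the paper: in the interval case by conjugation with an affine contraction, and in the circle case by use of the invariant-measure semi-conjugacy.

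For $V=[0,1]$, the key observation is that the affine contraction $\phi_t(x)=t/2+(1-t)x$ is a homeomorphism onto $[t/2,1-t/2]$, and since each $f_i$ fixes $0$ and $1$, the conjugate $\phi_t\circ f_i\circ\phi_t^{-1}$ fixes the endpoints of $[t/2,1-t/2]$ and hence extends by identity to a homeomorphism $f_i^t$ of $[0,1]$. The family $(f_i^t)_i$ inherits commutativity from $(f_i)_i$, equals the original action at $t=0$, and---since its support has diameter $1-t$---converges uniformly to the identity as $t\to 1^-$. Setting $f_i^1=\mathrm{id}$ yields the desired path.

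For $V=\clo$ I would proceed in two phases. First, use Markov--Kakutani to obtain a $\mathbb{Z}^d$-invariant probability measure $\mu$ on $\clo$, lift its distribution function to $\tilde h:\R\to\R$ by $\tilde h(x)=\mu([x_0,x])$ on one fundamental domain extended by $\tilde h(x+1)=\tilde h(x)+1$, and pick any lifts $\tilde f_i$ of the $f_i$. The invariance of $\mu$ yields the intertwining identity $\tilde h\circ\tilde f_i=\tilde h+\tau_i$ with $\tau_i$ the translation number of $\tilde f_i$, and this identity also forces the lifts $\tilde f_i$ to commute pairwise (applying $\tilde h$ to both sides of $\tilde f_i\tilde f_j=T^{k_{ij}}\tilde f_j\tilde f_i$ shows $k_{ij}=0$). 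For $s\in[0,1)$ set $\tilde\phi_s=(1-s)\,\mathrm{id}+s\tilde h$: this is strictly increasing and commutes with $x\mapsto x+1$, so descends to a homeomorphism $\phi_s$ of $\clo$; define $g_i^s=\phi_s f_i\phi_s^{-1}$, which remain commuting homeomorphisms of $\clo$. A direct computation using the intertwining identity gives
\[
\tilde g_i^s(y)\;=\;y+\tau_i+(1-s)\bigl(\tilde f_i(\tilde\phi_s^{-1}(y))-\tilde\phi_s^{-1}(y)-\tau_i\bigr),
\]
in which the bracketed expression is $1$-periodic and bounded, so $g_i^s$ converges uniformly to the rotation $R_{\tau_i}$ as $s\to 1^-$. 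Extending by $g_i^1=R_{\tau_i}$ produces a continuous path ending at the rotation action $(R_{\tau_i})_i$. The second phase then linearly deforms the rotation action to the trivial one via $(R_{(1-t)\tau_i})_i$ for $t\in[0,1]$, and concatenating the two phases completes the circle case.

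The main obstacle is that the semi-conjugacy $\tilde h$ need not be invertible---if $\mu$ is not fully supported or has atoms---so one cannot simply conjugate the action into a rotation action and carry over the deformation. The rescue is that for every $s<1$ the interpolant $\tilde\phi_s$ is a genuine homeomorphism, and the $1$-periodicity of the displacement $\tilde f_i-\mathrm{id}-\tau_i$ gives a uniform $O(1-s)$ bound on the defect between $\tilde g_i^s$ and the translation by $\tau_i$. This uniform estimate is what allows the continuous extension of the path up to $s=1$, playing here the role that boundedness of the affine (resp.\ logarithmic) derivative plays in the $C^{1+\mathrm{ac}}$ (resp.\ $C^1$) averaging arguments recalled in the paper.
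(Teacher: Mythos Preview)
Your interval argument (the Alexander trick) is correct; the paper in fact mentions this alternative in a footnote.

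Your circle argument has a genuine gap. You acknowledge that the invariant measure $\mu$ may have atoms, yet you assert that $\tilde\phi_s=(1-s)\,\mathrm{id}+s\,\tilde h$ is a homeomorphism for every $s<1$. This fails precisely when $\mu$ has atoms: if $\mu(\{p\})>0$, the distribution function $\tilde h(x)=\mu([x_0,x])$ has a jump discontinuity of size $\mu(\{p\})$ at $p$, and therefore $\tilde\phi_s$ has a jump of size $s\,\mu(\{p\})$ there --- it is not continuous, hence not a homeomorphism. The term $(1-s)\,\mathrm{id}$ does restore strict monotonicity when $\tilde h$ has plateaux (i.e.\ when $\mu$ is not fully supported), but it does nothing for continuity. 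And atomic invariant measures are unavoidable in general: already for $d=1$, a circle homeomorphism with one attracting and one repelling fixed point admits only convex combinations of the two Dirac masses as invariant probability measures.

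The paper sidesteps this by averaging the group elements themselves rather than passing through an invariant measure: the F{\o}lner-type averages $\varphi_n(x)=n^{-d}\sum_{0\le n_j<n}F_1^{n_1}\cdots F_d^{n_d}(x)$ are automatically continuous and strictly increasing (as finite averages of lifts of homeomorphisms), and a telescoping argument together with $(F_i^n(y)-y)/n\to\rho(F_i)$ gives $\varphi_n F_i\varphi_n^{-1}\to$ translation by $\rho(F_i)$ uniformly; linear interpolation between successive $\varphi_n$ then yields the path. Your approach can be repaired by replacing $\mu$ with suitably mollified, only approximately invariant measures --- essentially the ``quasi-invariant measures'' route the paper mentions in a remark after the proof --- but that requires substantially more than what you wrote.
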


\begin{proof} 
By identifying the endpoints, the case of the interval can be deduced from that of the circle, so let us only consider 
this one.\footnote{The case of the interval can be also ruled out using the classical Alexander trick. Notice that this 
work for any group action by homeomorphisms of the interval, but it doesn't work for actions on the circle, even in 
the Abelian case. Also notice that this argument cannot be applied in higher regularity.} 
Let $f_1, \ldots, f_d$ be the images of the canonical generators of $\Z^d$, 
and let $F_i$ be a lift of $f_i$ to the real line. Denote 
$$B (n) := \{ F_{1}^{n_1} F_2^{n_2} \cdots F_d^{n_{d}}: \,\, 0 \leq n_i < n \},$$  
and consider the map $\varphi_n$ defined as 
$$\varphi_n (x) := \frac{1}{n^d} \sum_{F \in B(n)} F (x).$$
Notice that $\varphi_n$ is a homeomorphism, since it is continuous and strictly increasing. 
Since the maps $F_j$ commute, for each $F_i$ we have 
$$
\varphi_n (F_i (x)) 
= \frac{1}{n^d} \sum_{0 \leq n_j < n} F_1^{n_1} \cdots F_{i-1}^{n_{i-1}}  F_i^{1+n_i}  F_{i+1}^{n_{i+1}} \cdots F_d^{n_d} (x),$$
and, again by commutativity, this equals 
$$\varphi_n (x) + \frac{1}{n^d} \Big[ \sum_{\substack{0 \leq n_j < n \\ j \neq i}} F_i^n (F_1^{n_1} \cdots F_{i-1}^{n_{i-1}}  F_{i+1}^{n_{i+1}} \cdots F_d^{n_d} (x))  
    - F_1^{n_1} \cdots F_{i-1}^{n_{i-1}}  F_{i+1}^{n_{i+1}} \cdots F_d^{n_d} (x) \Big].$$   
Recall that\, $ ( F^n_i (y) - y ) / n$ \, uniformly converges to the translation number $\rho (F_i)$. Since there are $n^{d-1}$ terms of type 
\, $F_i^n (y) - y$ \, in the right-side expression above, we deduce the (uniform) convergence 
$$\varphi_n (F_i (x)) - \varphi_n (x) \longrightarrow \rho (F_i) \qquad \mbox{ as } \quad n \to \infty.$$
Changing $x$ by $\varphi_n^{-1} (x)$, this yields 
$$\varphi_n (F_i (\varphi_n^{-1}(x))) \longrightarrow x +  \rho (F_i) \qquad \mbox{ as } \quad n \to \infty.$$
One readily checks that $\varphi_n$ commutes with the integer translations, hence induces a circle homeomorphism, that we still denote 
by $\varphi_n$. The convergence above translates into that $\varphi_n f_i \varphi_n^{-1}$ uniformly converges to the rotation by $\rho (F_i)$ 
mod. $\! \mathbb{Z}$, which is nothing but the rotation number of $f_i$. We have thus produced a sequence of conjugate actions that 
uniformly converges to an action by rotations. One can then construct a continuous path of such conjugates just by linear interpolation. 
More precisely, one considers circle homeomorphisms of the form $(1-s) \varphi_n + s \, \varphi_{n+1}$, with $s \in [0,1]$. 
Finally, having produced continuous paths of  conjugate actions ending at actions by rotations, one can connect any two 
$\mathbb{Z}^d$ actions just by moving the angles of these rotation actions.
\end{proof}

\begin{rem} The proof above actually shows more: the natural inclusion of $\mathrm{SO}(2,\mathbb{R})^d$ in the space of $\mathbb{Z}^d$ actions 
on the circle (endowed with the compact-open topology) is a homotopy equivalence. (Compare \cite[Proposition 4.2]{ghys}.) We do not know whether 
this result extends to higher regularity.
\end{rem}

\begin{rem} One can produce a different proof of Proposition \ref{connexe-cont} by conjugating as in \cite{Na2} via quasi-invariant probability measures 
that evolve towards the Lebesgue measure. Both arguments apply more generally to nilpotent group actions. The structural results from \cite{parkhe} 
can also be adapted to produce still another proof.
\end{rem}


\subsection{On the technique of proof of Theorem \ref{t:B}}
\label{th-B}

The proof of Theorem \ref{t:B} is identical for $d=2$ or $d > 2$. For readability reasons, we will thus restrict to the case $d=2$. 
The space under scrutiny is hence identified to the subspace of $(\mathrm{Diff}^2_+[0,1])^2$ made of pairs of commuting 
diffeomorphisms, endowed with the $C^2$-topology. 

Roughly, we show that, given a pair of commuting $C^2$ diffeomorphisms $(f_1,f_2)$ and 
$\varepsilon>0$, the interval $[0,1]$ can be subdivided into finitely many intervals $I$ so that:

\begin{itemize}

\item $f_1$ and $f_2$ are $C^2$-tangent to the identity 
at every interior endpoint of each $I$;

\item for each such $I$, the pair of restrictions $(f_1 |_I , f_2 |_I )$ is either $\varepsilon$-close (in $C^2$ topology) 
to $(\mathrm{id}_{I},\mathrm{id}_{I})$ or in the connected component of $(\mathrm{id}_{I},\mathrm{id}_{I})$ among pairs 
of commuting diffeomorphisms that are simultaneously $C^2$ tangent to the identity at each point of $\partial I\cap (0,1)$. 

\end{itemize}

\noindent This shows that, for every $\varepsilon > 0$, the pair $(f_1,f_2)$ is at distance less than $\eps$ from the connected 
component of $(\mathrm{id},\mathrm{id})$ in the space of $C^2$ commuting diffeomorphisms. Since this component is closed, 
$(f_1,f_2)$ belongs to it, thus yielding the announced connectedness. 

Intervals $I$ of the second type are in fact themselves subdivided into two types of intervals depending on whether the relative translation 
number between $f_1$ and $f_2$ is rational or not. In the former case, we show that $(f_1 |_I , f_2 |_I )$ belongs to the \emph{path}-connected 
component of $(\mathrm{id}_{I},\mathrm{id}_{I})$, while in the latter case we show that the closure of the conjugacy class of $(f_1 |_I , f_2 |_I )$ 
(and thus of its path-connected component) intersects the path-connected component of $(\mathrm{id}_{I},\mathrm{id}_{I})$. 
Details are provided in \S \ref{s:B}.

\vspace{0.1cm}

Notice that despite the similarities of the sketch of proof above with that of the $C^{\infty}$ case given in \cite{EB}, there is a huge difference. 
Namely, therein it is proved that the path-connected component of $(\mathrm{id},\mathrm{id})$ is dense in the space of all $\mathbb{Z}^2$ 
actions, so its connected component is the whole space. In the present $C^2$ setting we are unable to decide whether this is true or not. 
The problem is
 that, in the last of the cases above, we do not know whether the approximation of the path-connected 
component of $(\mathrm{id}_{I},\mathrm{id}_{I})$ by conjugates occurs along {\em a path} of conjugates.

In view of this, the next two questions become natural and worth for future research.

\begin{qs} Is the path-connected component of the trivial action dense in the space of all Abelian group actions by $C^2$ diffeomorphisms of the interval?
\end{qs}

\begin{qs} In the same setting, does the path-connected component of any action contain the trivial one in its closure? Is it dense in the space of all actions?
\end{qs}


\section{On the asymptotic variation and Mather invariant}
\label{section-review}

\subsection{Asymptotic variation and conjugacies}
\label{section-distortion}

Given a diffeomorphism $f$ of a compact (connected) 1-manifold $V$ 
({\em i.e.} the circle or the closed interval), we let $\var (\log Df)$ be the total variation 
of the logarithm of its derivative. (We use the notations $C^{1+\mathrm{bv}}$ and $\mathrm{Diff}^{1+\mathrm{bv}}$ 
to refer to maps for which this is a finite number.) The 
{\em asymptotic variation} of $f$ is defined as
$$V_{\infty} (f) := \lim_{n \to \infty} \frac{\var (\log Df^n)}{n}.$$
Notice that this limit exists because of the subadditivity of \esp $\var (\log D(\cdot))$. \esp 
Moreover, for each integer $n \geq 1$, one has 
\, $\var (\log Df^n) \leq n \, \var (\log Df),$ \, 
and therefore 
\begin{equation}\label{eq-control}
\dist (f) \leq \var (\log Df).
\end{equation}
Also notice that \esp $\var \log D (\cdot)$ \esp is invariant under passing to the inverse; as a consequence, 
\begin{equation}\label{eq-inv}
\dist (f) = \dist (f^{-1}).
\end{equation}
Moreover, it is homogeneous: for $n \in \mathbb{Z}$,
\begin{equation}\label{eq-hom}
\dist (f^n) = |n| \, \dist (f).
\end{equation}

Unlike $\var (\log D (\cdot))$, the quantity $\dist (\cdot)$ is invariant under conjugacy. 
Actually, it arises as an infimum along conjugates: for every $C^{1+\mathrm{bv}}$ diffeomorphism, one has 
\begin{equation}\label{eq-conj}
\dist (f) = \inf_{h \in \Diff^{1+\mathrm{bv}}_+(V)} \var (\log D (hfh^{-1})).
\end{equation}
This appears as Proposition 1.2 in \cite{EN} for the case of the interval, yet the very same proof applies to the case of the circle.

Because of the equality above, asymptotic variation is crucial in regard to the problem of approximating either the identity 
(in the case of the interval) or a rotation (in the case of the circle) by conjugates in the $C^{1+\mathrm{bv}}$ topology. This is 
reflected by the next result, which appears as Theorem \ref{t:B} in \cite{Na1} for the case of the circle, but whose proof works verbatim 
for the case of the interval (the relevant hypothesis is the vanishing of the asymptotic variation of maps); 
see Proposition \ref{prop-two} below for more details.

\medskip

\begin{prop} \label{prop-one} 
Let $f_1, \ldots, f_d$ be commuting $C^{1+\mathrm{bv}}$ diffeomorphisms of a compact 1-mani\-fold. 
If the asymptotic variation of each $f_i$ vanishes, then there exists a continuous path (for the $C^{1+\mathrm{bv}}$ topology)
of simultaneous conjugates $h_t f_i h_t^{-1}$ that starts at the given $f_i$ and finishes at isometries. Moreover,
along this path, each function $t \mapsto \var (\log D (h_t f_it_t^{-1}))$ is bounded from above by $\var  (\log Df_i)$.  
Finally, if $f_1,\ldots,f_d$ are of class $C^{1+\mathrm{ac}}$, then the path is continuous for the $C^{1+\mathrm{ac}}$-topology.
\end{prop}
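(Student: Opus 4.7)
The plan is to leverage the variational characterization (\ref{eq-conj}) of the asymptotic distortion. Since $\dist(f_i) = 0$ for each $i$, for any $\eps > 0$ one could in principle find a conjugator $h$ making $\var(\log D(hf_ih^{-1})) < \eps$ for a single $f_i$. The two issues to address are simultaneity across $i$ and continuity in the parameter $t$.

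First I would construct a sequence of simultaneous conjugators $H_n$ by averaging over the commuting cube $B(n) := \{f_1^{n_1} \cdots f_d^{n_d} : 0 \leq n_j < n\}$, mimicking the averaging $\varphi_n$ used in the proof of Proposition \ref{connexe-cont} but one derivative higher: instead of averaging the maps themselves, one averages their log-derivatives, defining $H_n$ (up to normalization as an orientation-preserving diffeomorphism) through
$$\log DH_n \, = \, -\frac{1}{|B(n)|} \sum_{g \in B(n)} \log Dg.$$
A telescoping based on the commutativity of the $f_i$, analogous to the one in the proof of Proposition \ref{connexe-cont}, shows that conjugation by $H_n$ cancels the bulk contribution to $\log D(H_n f_i H_n^{-1})$, leaving only $n^{d-1}$ boundary terms whose total variation is controlled by $\frac{1}{n} \var(\log Df_i^n)$. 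By the very definition of asymptotic distortion this quantity tends to $0$, so $\var(\log D(H_n f_i H_n^{-1})) \to 0$ simultaneously in $i$. The a priori bound $\var(\log D(H_n f_i H_n^{-1})) \leq \var(\log Df_i)$ drops out of the same computation combined with the subadditive estimate (\ref{eq-control}) applied to $f_i^n$.

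To upgrade this sequence to a continuous path, I would interpolate between successive $H_n$'s on nested intervals: over $[1-1/n,\, 1-1/(n+1)]$, form a convex combination at the level of log-derivatives, then exponentiate and integrate to recover a diffeomorphism. Convexity of $\var$ at the log-derivative level ensures continuity in the $C^{1+\mathrm{bv}}$ (resp.\ $C^{1+\mathrm{ac}}$) topology, and the bound $\var(\log D(h_t f_i h_t^{-1})) \leq \var(\log Df_i)$ is preserved all along. At $t=1$ the family $h_t f_i h_t^{-1}$ converges to an isometry.

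The hard part -- and the reason the same argument transfers verbatim from the circle case of \cite{Na1} to the interval case needed here -- is to verify that the variation bound survives the nonlinear operation of conjugation along the interpolating path, and in particular that nothing in the telescoping step used any feature of the circle topology beyond commutativity. Performing the averaging at the log-derivative level (rather than at the level of the maps themselves, as in Proposition \ref{connexe-cont}) is precisely what makes the convexity arguments available, and this is where the passage from $C^0$ to $C^{1+\mathrm{bv}}$ regularity is genuinely used.
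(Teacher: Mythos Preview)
Your plan coincides with the paper's own argument (spelled out in detail for the more general Proposition~\ref{prop-two}): define the conjugator by averaging $\log D$ over the cube $B(n)$, telescope using commutativity to bound $\var\big(\log D(H_n f_i H_n^{-1})\big)$ by $\tfrac{1}{n}\var(\log Df_i^n)$, then interpolate between successive $H_n$ at the level of log-derivatives. One correction: the sign in your formula for $\log DH_n$ should be $+$, not $-$. With the minus sign the chain-rule computation yields $2\log Df_i(x)-\tfrac{1}{|B(n)|}\sum_{g}\log Df_i(g(x))$, which does not collapse; with the plus sign one obtains $\tfrac{1}{|B(n)|}\sum_g\big[\log D(gf_i)-\log Dg\big]$, which telescopes over the cube exactly as you describe.
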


\medskip

The hypothesis of vanishing asymptotic 
variation is satisfied in two relevant cases. On the one hand, it is shown in \cite{Na1} that it holds for every $C^{1+\mathrm{ac}}$ 
circle diffeomorphism of irrational rotation number. (Warning: this result is false for $C^{1+\mathrm{bv}}$ diffeomorphisms; 
see Proposition~2.2 therein.) On the other hand, it also holds if $f$ is a $C^{1+\mathrm{bv}}$ diffeomorphism of the interval 
with no interior fixed point that has a $C^1$ centralizer non-isomorphic to $\mathbb{Z}$ and for which the 
endpoints are parabolic. This follows from the relation between $\dist$ and the Mather invariant, as explained later on.

We next give a more general version of Proposition \ref{prop-one} whose proof follows the very same lines but still applies 
in  case of nonvanishing asymptotic variation.

\medskip

\begin{prop} \label{prop-two} 
Given any family of commuting $C^{1+\mathrm{bv}}$ (resp. $C^{1+\mathrm{ac}}$) 
diffeomorphisms $f_1, \ldots, f_d$  of a compact 1-manifold and $\varepsilon > 0$, 
there exists a $C^{1+\mathrm{bv}}$-continuous (resp. $C^{1+\mathrm{ac}}$-continuous) 
path of simultaneous conjugates $h_t f_i h_t^{-1}$ that starts at the given $f_i$ and finishes at 
(commuting) diffeomorphisms $\bar{f}_i$ such that \, $\var (\log D \bar{f}_i) \leq \dist (f_i) + \varepsilon$. 
Moreover,
 along 
this path, each function $t \mapsto \var (\log D (h_t f_it_t^{-1}))$ is bounded from above by $\var  (\log Df_i)$.
\end{prop}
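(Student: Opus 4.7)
The plan is to carry out the very same construction used in the proof of Proposition \ref{prop-one} in \cite{Na1}, but to interrupt it before reaching isometries. Indeed, that proof (together with the proof of the equivariant characterization \eqref{eq-conj} given in \cite{EN}) provides an explicit sequence $\{H_n\}$ in $\Diff^{1+\mathrm{bv}}_+(V)$, obtained by an averaging procedure applied to the affine derivatives along large balls of the group generated by $f_1,\ldots,f_d$, such that
\[
\var\bigl(\log D(H_n f_i H_n^{-1})\bigr) \longrightarrow \dist(f_i) \qquad \text{as } n \to \infty,
\]
for every $i=1,\ldots,d$. Crucially, a single sequence $\{H_n\}$ realizes this limit simultaneously for all $f_i$; this uses the commutativity of the $f_i$, which translates into the equivariance properties of the averaging procedure.

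Fix $\varepsilon>0$. The first step is to choose $N$ large enough that
\[
\var\bigl(\log D(H_N f_i H_N^{-1})\bigr) \leq \dist(f_i)+\varepsilon \qquad (i=1,\ldots,d),
\]
and to declare $\bar f_i := H_N f_i H_N^{-1}$. To upgrade the discrete sequence $\{H_n\}_{n=0}^N$ (with $H_0=\mathrm{id}$) into a continuous path, I would interpolate between consecutive $H_n$ and $H_{n+1}$ by means of the natural one-parameter family produced at each step of the iterative averaging construction: in the $C^{1+\mathrm{bv}}$ case this family is $C^{1+\mathrm{bv}}$-continuous in the interpolation parameter, and if the $f_i$ are $C^{1+\mathrm{ac}}$ the same family is continuous in the stronger topology, the underlying estimate being precisely the one carried out in \cite[Proposition~1.2]{EN}. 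Concatenating and reparametrizing the resulting elementary paths yields the desired $h_t$.

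The delicate point, which I expect to be the main technical obstacle, is the monotonicity estimate $\var(\log D(h_t f_i h_t^{-1})) \le \var(\log D f_i)$ along the whole path. This is precisely the bound established step-by-step in the proof of Proposition \ref{prop-one}; as the excerpt already hints when it says that the proof ``works verbatim,'' the argument of \cite{Na1} never actually uses the vanishing of $\dist(f_i)$, only subadditivity of $\var\log D(\cdot)$ together with the fact that each elementary step of the averaging is non-expansive for the total variation of the conjugate. Verifying that this is indeed the case by carefully revisiting the corresponding estimate in \cite{Na1} is the principal obligation; the remaining ingredients (existence of the limit, simultaneity in the $f_i$, and the upgrade to $C^{1+\mathrm{ac}}$ regularity) transfer without modification from Proposition \ref{prop-one}.
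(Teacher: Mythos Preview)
Your proposal is correct and follows essentially the same route as the paper: the paper writes down the averaging conjugators $g_n$ explicitly (your $H_n$), shows directly that $\var(\log D(g_n f_i g_n^{-1})) \le \var(\log Df_i^n)/n$, which by subadditivity gives both the convergence to $\dist(f_i)$ and the uniform bound by $\var(\log Df_i)$, and then interpolates between $g_n$ and $g_{n+1}$ via the geometric mean of their derivatives. The only minor imprecision is that the construction is not iterative---each $g_n$ is defined from scratch over the ball $B(n)$---but this does not affect your argument.
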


\begin{proof} For each $n \geq 1$, let $g_n$ be defined by letting $g_n (0) = 0$ and 
\begin{equation}\label{eq:def-g_n}
Dg_n (x) := 
\frac{\left[ \prod_{0 \leq n_j < n} D(f_1^{n_1} \cdots f_d^{n_d}) (x) \right]^{\frac{1}{n^d}}}
{\int_0^1  \left[ \prod_{0 \leq n_j < n} D(f_1^{n_1} \cdots f_d^{n_d}) (y) \right]^{\frac{1}{n^d}} \, dy}.
\end{equation}
This defines a diffeomorphism, since the prescribed value for $Dg_n$ is everywhere positive and 
the total integral of this function equals $1$. Using commutativity and the chain rule 
$$D (g_n \circ f_i \circ g_n^{-1}) (g_n (x)) 
= \frac{Dg_n (f_i (x))}{Dg_n (x)} \cdot Df_i (x) ,$$
we compute:
\begin{eqnarray*}
D (g_n \circ f_i \circ g_n^{-1}) (g_n (x)) 
&=& \frac{\left[ \prod_{0 \leq n_j < n} D(f_1^{n_1} \cdots f_d^{n_d}) (f_i(x)) \right]^{\frac{1}{n^d}}}
{\left[ \prod_{0 \leq n_j < n} D(f_1^{n_1} \cdots f_d^{n_d}) (x) \right]^{\frac{1}{n^d}}} \cdot Df_i (x) \\
&=& \frac{\left[ \prod_{0 \leq n_j < n} D(f_1^{n_1} \cdots f_d^{n_d}) (f_i(x)) \cdot Df_i (x)\right]^{\frac{1}{n^d}}}
{\left[ \prod_{0 \leq n_j < n} D(f_1^{n_1} \cdots f_d^{n_d}) (x) \right]^{\frac{1}{n^d}}} \\
&=& \frac{\left[ \prod_{0 \leq n_j < n} D(f_1^{n_1} \cdots f_i^{1+n_i} \cdots f_d^{n_d}) (x) \right]^{\frac{1}{n^d}}}
{\left[ \prod_{0 \leq n_j < n} D(f_1^{n_1} \cdots f_i^{n_i} \cdots f_d^{n_d}) (x) \right]^{\frac{1}{n^d}}} \\
&=& \frac{\left[ \prod_{0 \leq n_j < n; \, j \neq i} D(f_i^{n} f_1^{n_1} \cdots f_{i-1}^{n_{i-1}} f_{i+1}^{n_{i+1}} \cdots f_d^{n_d}) (x) \right]^{\frac{1}{n^d}}}
{\left[ \prod_{0 \leq n_j < n; \, j \neq i} D(f_1^{n_1} \cdots f_{i-1}^{n_{i-1}} f_{i+1}^{n_{i+1}} \cdots f_d^{n_d}) (x) \right]^{\frac{1}{n^d}}} \\
\end{eqnarray*}
Thus,
$$D (g_n \circ f_i \circ g_n^{-1}) (g_n (x)) 
=\left[ \prod_{0 \leq n_j < n; j \neq i} D(f_i^{n}) (f_1^{n_1} \cdots f_{i-1}^{n_{i-1}} f_{i+1}^{n_{i+1}} \cdots f_d^{n_d} (x) ) \right]^{\frac{1}{n^d}}
,$$
and therefore
$$\log \big( D (g_n \circ f_i \circ g_n^{-1}) (g_n (x)) \big) = 
\frac{1}{n^d} \sum_{\substack{0 \leq n_j < n; \\ j \neq i}} \log (Df_i^n) (f_1^{n_1} \cdots f_{i-1}^{n_{i-1}} f_{i+1}^{n_{i+1}} \cdots f_d^{n_d} (x) ).$$
Since $\var (\log D (\cdot))$ is invariant under change of coordinates, a triangle inequality yields 
$$\var \big( \log D (g_n \circ f_i \circ g_n^{-1})  \big) 
\leq \frac{1}{n^d} \sum_{\substack{0 \leq n_j < n \\ j \neq i}} \var (\log Df_i^n) .$$
Finally, by an elementary counting argument, 
$$\var \big( \log D (g_n \circ f_i \circ g_n^{-1})  \big) 
\leq  \frac{n^{d-1} }{n^d}\, \var (\log Df_i^n) = \frac{\var (\log Df_i^n)}{n}.$$

Now, by definition, the right-side expression above converges to $\dist (f_i)$. Therefore, we may 
fix an integer $N_i$ so that it becomes smaller than or equal to \, $ \dist (f_i) + \varepsilon$. \, 
Letting $N := \max_i N_i$, we obtain a sequence of conjugate 
actions with the desired properties ending at the conjugate action by $g_N$. To obtain a continuous path, it suffices to 
interpolate between (the derivatives) of $g_n$ and $g_{n+1}$: for $s \in [0,1]$, define $g_s$ by letting $g_s(0)=0$ and
$$Dg_s (x) = C_s \, Dg_n (x)^{1-s} \, Dg_{n+1} (x)^s$$
for a well-chosen constant $C_s$ so that  
\, $\int_0^1 Dg_s (x) \, dx = 1.$ \, 
Indeed, this does not increase the $C^{1+\mathrm{ac}}$-norm beyond those of $g_n  f_i  g_n^{-1}$ and $g_{n+1} f_i g_{n+1}^{-1}$. 
The details are left to the reader.
\end{proof}


\subsection{A detour on drift of cocycles in Banach spaces}

Most of the analysis done in \cite{Na1} leading to Proposition \ref{prop-one} works for cocycles with respect to 
isometric actions on Banach spaces (see Lemma 2.1 therein). In the same way as Proposition~\ref{prop-two} extends Proposition 
\ref{prop-one} to the case of nonvanishing asymptotic variation, Lemma~2.1 from \cite{Na1} can be extended to cocycles with
 nonzero drift, as we explain below.
 
Let $U$ be a linear isometric action of a group $\Gamma$ on a Banach space $\mathbb{B}$. A {\em cocycle} for $U$ is a map 
$c \!: \Gamma \to \mathbb{B}$ that, for all $g_1,g_2 $ in $\Gamma$, satisfies the relation
$$c (g_1 g_2) = c(g_2) + U(g_2) (c(g_1)).$$ 
For each $f \in \Gamma$, we define {\em the drift of $c$ at $f$ as}
$$\mathrm{drift}_c (f) := \lim_{n \to \infty} \frac{\| c(f^n) \|_{\mathbb{B}}}{n}.$$ 
The limit above exists because the sequence of norms $\| c(f^n) \|_{\mathbb{B}}$ is subadditive; indeed,
$$\| c(f^{m+n}) \|_{\mathbb{B}} = \| c(f^n) + U(f^n) (c(f^m)) \|_{\mathbb{B}} \leq \|c(f^n)\|_{\mathbb{B}} + \|U(f^n)(c(f^m))\|_{\mathbb{B}} = \|c(f^n)\|_{\mathbb{B}} + \|c(f^m)\|_{\mathbb{B}}.$$

The next lemma should be compared to \cite{CTV}, and is suitable for applications in wide contexts. 
As the reader will notice, the proof is an adaptation of that of Proposition \ref{prop-two} to this broader context (cf. end of this section). 

\begin{lem} \label{lem-Banach}
Let $U$ be a linear isometric action of a finitely generated Abelian group $\Gamma$ on 
a Banach space $\mathbb{B}$, and let $c \!: \Gamma \to \mathbb{B}$ be a cocycle. Then there exists a 
sequence of vectors $\psi_n \in \mathbb{B}$ such that, for all $f \in \Gamma$, the coboundary defect
$$\big\| c(f) - \big( \psi_n - U(f) (\psi_n) \big) \big\|_{\mathbb{B}}$$ 
converges to $\mathrm{drift}_c (f)$ as $n$ goes to infinity.
\end{lem}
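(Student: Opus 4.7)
The plan is to adapt the averaging construction from the proof of Proposition \ref{prop-two} to the Banach-space setting. Fix generators $f_1,\ldots,f_d$ of $\Gamma$ and, for $\mathbf{n}=(n_1,\ldots,n_d)\in\Z^d$, write $f^{\mathbf{n}}:=f_1^{n_1}\cdots f_d^{n_d}$. Set
$$\psi_n \esp := \esp \frac{1}{n^d}\sum_{\mathbf{n}\in[0,n)^d} c(f^{\mathbf{n}}),$$
and, for an arbitrary element $f=f^{\mathbf{m}}\in\Gamma$, aim to estimate the coboundary defect $\|c(f)-(\psi_n-U(f)(\psi_n))\|$. Commutativity and the cocycle identity give $U(f)(c(f^{\mathbf{n}}))=c(f^{\mathbf{n}+\mathbf{m}})-c(f)$; averaging over the box and reorganizing, one obtains
$$c(f) - \bigl(\psi_n - U(f)(\psi_n)\bigr) \esp = \esp \frac{1}{n^d}\sum_{\mathbf{n}\in[0,n)^d}\bigl[c(f^{\mathbf{n}+\mathbf{m}}) - c(f^{\mathbf{n}})\bigr].$$

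The heart of the argument is to partition $[0,n)^d$ into (maximal segments of) lattice lines in the direction $\mathbf{m}$ and telescope along each. On a line meeting the box in $K_\ell$ consecutive lattice points, the telescoped sum collapses to a single boundary difference which, by a further application of the cocycle identity, equals $\pm U(\cdot)\bigl(c(f^{K_\ell})\bigr)$. Because $U$ is isometric, the triangle inequality yields
$$\bigl\|c(f)-\bigl(\psi_n-U(f)(\psi_n)\bigr)\bigr\| \esp \leq \esp \frac{1}{n^d}\sum_{\ell}\|c(f^{K_\ell})\|.$$
Now $\sum_\ell K_\ell=n^d$ and $\|c(f^{K})\|/K\to\mathrm{drift}_c(f)$ by definition. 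Fixing $\eps>0$ and $K_0$ so that $\|c(f^K)\|\le K(\mathrm{drift}_c(f)+\eps)$ for all $K\ge K_0$, I would split the sum into long lines ($K_\ell\ge K_0$), which contribute at most $(\mathrm{drift}_c(f)+\eps)\,n^d$, and the $O(n^{d-1})$ ``short'' lines near the boundary of the box, each contributing a bounded constant. Dividing by $n^d$ gives $\limsup_n\|c(f)-(\psi_n-U(f)(\psi_n))\|\le\mathrm{drift}_c(f)+\eps$, hence $\le\mathrm{drift}_c(f)$.

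For the matching lower bound (which also establishes equation (\ref{eq-drift-cob}) as a byproduct), I would iterate the cocycle relation into $c(f^n)=\sum_{k=0}^{n-1}U(f^k)(c(f))$; substituting $c(f)=\psi-U(f)(\psi)+r$ for any $\psi\in\mathbb{B}$ produces the telescoping
$$c(f^n) \esp = \esp \psi - U(f^n)(\psi) + \sum_{k=0}^{n-1}U(f^k)(r),$$
whence $\|c(f^n)\|/n\le 2\|\psi\|/n+\|r\|$ and so $\mathrm{drift}_c(f)\le\|r\|$ for every $\psi$. Combined with the upper bound above, this gives the required convergence.

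The main obstacle, compared to the computation used for Proposition \ref{prop-two}, is handling an $f$ that is not one of the generators. For $\mathbf{m}=e_i$ the lattice lines are axis-aligned with $K_\ell\equiv n$, and the estimate reduces at once to $\|c(f_i^n)\|/n$ exactly as in Proposition \ref{prop-two}; for a general $\mathbf{m}$ the line lengths $K_\ell$ are inhomogeneous, and one has to carefully isolate the bulk contribution from the $O(n^{d-1})$ short boundary lines in order to recover the precise value $\mathrm{drift}_c(f)$ rather than a crude bound in terms of $\|c(f)\|$.
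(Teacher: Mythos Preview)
Your argument is correct, and the lower-bound half is identical to the paper's. The upper-bound half, however, takes a genuinely different route. The paper sidesteps your lattice-line analysis by a trick: rather than fixing a finite generating set and handling an arbitrary direction $\mathbf{m}$, it \emph{enumerates all of $\Gamma$} as $f_1,f_2,\ldots$ and averages $c$ over the growing ``box'' $B(n)=\{f_1^{m_1}\cdots f_n^{m_n}:0\le m_i<n\}$. Any given $f\in\Gamma$ equals some $f_i$, so for $n\ge i$ the element $f$ is itself one of the coordinate directions of $B(n)$; the telescoping then reduces to the axis-aligned case (your $K_\ell\equiv n$ situation), and one reads off the bound $\|c(f^n)\|/n$ immediately, with no short-line boundary terms to control. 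What the paper's approach buys is brevity: the estimate for every $f$ becomes exactly the clean one you obtain only for the generators. What your approach buys is a $\psi_n$ built from a fixed generating set (rather than one that changes with $n$), together with a transparent geometric picture; the cost is the extra bookkeeping of bounding the number of lines meeting the box (at most $\|\mathbf m\|_1\, n^{d-1}$, as each line has an entry point with $\mathbf q-\mathbf m\notin[0,n)^d$) and the resulting $O(n^{-1})$ error from the short ones.
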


\begin{proof} We number the elements of $\Gamma$ as $f_1,f_2, \ldots$ 
Let us denote 
$$B (n) := \{ f_{1}^{m_1} f_2^{m_2} \cdots f_n^{m_{n}}: \,\, 0 \leq m_i < n \}$$
For each $n \geq 1$, define
\begin{equation}
\psi_n := \frac{1}{n^n} \sum_{g \in B(n)} c(g).
\label{int-zero}
\end{equation}
Each $f \in \Gamma$ equals $f_i$ for a certain index $i$. Then, for each $n \geq i$, 
\begin{eqnarray*}
U(f) (\psi_n) \!\!\!\!\!\!\!
&=& \!\!\!\!\!\!\! \frac{1}{| B(n) |} \sum_{g \in B(n)} U(f) ( c(g) )
\quad = \quad
 \frac{1}{| B(n) |} \sum_{g \in B(n)} [c(g f) - c(f)] \qquad \hspace{1cm} \\
 &\qquad \qquad = \,\,\, & - c(f) + \frac{1}{ | B(n) | } \sum_{g \in B(n)} c(g f) 
\quad = \quad  - c(f) + \frac{1}{ | B(n) | } \sum_{g \in B(n)} c(f g).
\end{eqnarray*}
Therefore, 
$$\Big\| c ( f ) - ( \psi_n - U(f) (\psi_n) ) \Big\|_{\mathbb{B}} \leq \frac{1}{n^n} \Bigl\| \sum_{g \in B(n)} [ c (f g) - c(g) ] \Bigr\|_{\mathbb{B}},$$
and the last expression equals
$$\frac{1}{n^n} \left\| \sum_{\substack{0 \leq m_j < n \\ j \neq i}} \big[ c (f^n f_1^{m_1} \cdots f_{i-1}^{m_{i-1}} f_{i+1}^{m_{i+1}} \cdots f_n^{m_n}) 
- c (f_1^{m_1} \cdots f_{i-1}^{m_{i-1}} f_{i+1}^{m_{i+1}} \cdots f_n^{m_n}) \big] \right\|_{\mathbb{B}} \! \! .$$
By the cocycle relation, this reduces to
$$\frac{1}{n^n} \left\| \sum_{\substack{0 \leq m_j < n \\ j \neq i}} 
U (f_1^{m_1} \cdots f_{i-1}^{m_{i-1}} f_{i+1}^{m_{i+1}} \cdots f_n^{m_n}) (c (f^n) ) \right\|_{\mathbb{B}},$$
which, by the triangular inequality, is smaller than or equal to 
$$\frac{1}{n^n} \sum_{\substack{0 \leq m_j < n \\ j \neq i}} 
\left\| U (f_1^{m_1} \cdots f_{i-1}^{m_{i-1}} f_{i+1}^{m_{i+1}} \cdots f_n^{m_n}) (c (f^n) ) \right\|_{\mathbb{B}} 
= \frac{1}{n^n} \sum_{\substack{0 \leq m_j < n \\ j \neq i}} \| c(f^n) \|_{\mathbb{B}}.$$
The last expression is equal to
$$\frac{1}{n^n} \, n^{n-1} \| c (f^n) \|_{\mathbb{B}} = \frac{\| c (f^n) \|_{\mathbb{B}}}{n}.$$
By definition, this converges to $\mathrm{drift}_c (f)$ as $n \to \infty$. Therefore, the lim sup of
$$\| c ( f ) - ( \psi_n - U( f ) (\psi_n) ) \|_{\mathbb{B}} $$
is at most $\mathrm{drift}_{c} (f)$. 

Conversely, if for $f \in \Gamma$ and $\psi \in \mathbb{B}$ we let 
\, $C := \| c(f) - (\psi - U(f) (\psi)) \|_{\mathbb{B}}$, \, 
then, for each $i \geq 1$, we have
$$C = \| U (f^i) (c(f)) - (U(f^i)(\psi) - U(f^{i+1})(\psi)) \|_{\mathbb{B}}.$$
The triangular inequality and the cocycle relation (together with $c (id) = 0$) then yield
\begin{eqnarray*}
n \, C 
&=& \sum_{i=0}^{n-1}  \big\| U (f^i) (c(f)) - (U(f^i)(\psi) - U(f^{i+1})(\psi)) \big\|_{\mathbb{B}} \\
&\geq& \left\|  \sum_{i=0}^{n-1} U (f^i) (c(f)) -  \big( U(f^i)(\psi) - U(f^{i+1})(\psi) \big) \right\|_{\mathbb{B}} \\
&=& \left\| \sum_{i=0}^{n-1} [c(f^{i+1})-c(f^i)] - \big( \psi - U(f^n) (\psi) \big) \right\|_{\mathbb{B}} \\
&=& \big\| c(f^n) - ( \psi - U(f^n)( \psi )) \big\|_{\mathbb{B}} \\
&\geq& \|c(f^n)\|_{\mathbb{B}} - \|\psi\|_{\mathbb{B}} - \|U(f^n) (\psi) \big) \|_{\mathbb{B}}.
\end{eqnarray*}
Therefore,
$$C \geq \frac{\|c(f^n)\|_{\mathbb{B}}}{n}  - 2 \frac{\|\psi\|_{\mathbb{B}}}{n}.$$
Passing to the limit in $n$ this yields $C \geq \mathrm{drift}_c (f).$
\end{proof}

It follows as a corollary of the proof above that for every cocycle $c$ associated to a linear isometric 
action $U$, the drift of $c$ at $f \in \Gamma$ equals its {\em coboundary defect}, which is defined as 
\begin{equation}\label{eq-drift-cob}
\mathrm{drift}_c (f) = \inf_{\psi \in \mathbb{B} }\big\| c(f) - \big( \psi - U(f) (\psi) \big) \big\|_{\mathbb{B}}.
\end{equation}

If $\Gamma$ is an Abelian group of $C^{1+\mathrm{ac}}$ diffeomorphisms of a compact 1-manifold 
$V$, then $U \!: (f,\varphi) \mapsto ( \varphi \circ f) \cdot Df$ is an isometric action on $\mathbb{B} = L^1(V)$, 
and $c(f) := \frac{D^2 f}{D f}$ is a cocycle for this action. The drift of this cocycle at $f$ is nothing but 
the asymptotic variation of $f$. In this view, equality (\ref{eq-drift-cob}) above should be compared 
to (\ref{eq-conj}). This is another justification for the use of the terminology $C^{1+ac}$-norm for a 
diffeomorphism: roughly, via the asymptotic variation, we transform conjugacy issues into questions 
related to paths in an $L^1$ space. 


\subsection{Mather invariant and the fundamental inequality}
\label{section-mather}

For every $C^2$ diffeomorphism $f$ of $[0,1)$ with no fixed point in the interior, George Szekeres has built in \cite{szekeres} 
a $C^1$ \emph{generating vector field}, that is, a complete vector field for which $f$ is the time-$1$ map of its flow. In the Appendix I
of this work, we carry out a non straightforward extension of this classical construction to $C^{1+\mathrm{bv}}$ diffeomorphisms 
and study its continuity properties. 

\medskip

\noindent\textbf{Warning.} 
In order to reduce the amount of notation, in what follows we will often 
identify
a vector field $\X$ on an interval 
$I$ of $\R$ with the \emph{function} $dx(\X)$, where $x$ denotes the coordinate on $\R$. With this abuse, given a 
diffeomorphism $h$, the pushforward $h_*\X$ will become the function $(Dh \times \X)\circ h^{-1}$, and the pull-back 
$h^*\X$ the function $\frac{\X\circ h}{Dh}$.

\medskip 

With the extension of Szekeres' vector fields  at hand, we can proceed to extend the definition of the \emph{Mather invariant} to 
interval diffeomorphisms of regularity lower than $C^2$ along the classical lines. Namely, we let $\Diff^{2,\Delta}_+ ([0,1])$ (resp. 
$\Diff^{1+\mathrm{bv},\Delta}_+ ([0,1])$, $\Diff^{1+\mathrm{ac}, \Delta}_+([0,1])$) be the set of $C^2$ (resp. $C^{1+\mathrm{bv}}$, 
$C^{1+\mathrm{ac}}$) diffeomorphisms of the interval with no fixed point in the interior.  (The letter $\Delta$ stands for the latter condition.) 
For $f\in \Diff^{1+\mathrm{bv},\Delta}_+ ([0,1])$, let $\X$ and $\Y$ be the left and right vector fields of $f$, respectively. (The former 
arises by seeing $f$ as a diffeomorphism of $[0,1)$, and the latter as a diffeomorphism of $(0,1]$.) Together with them comes a 
{\em Mather diffeomorphism} $M_f :=  P_{\Y} \circ P_{\X}^{-1}$, where $P_{\X}$ (resp. $P_{\Y}$) is the $C^{1+\mathrm{bv}}$ diffeomorphism 
from $(0,1)$ to $\R$ induced by $\X$ (resp. $\Y$) sending some fundamental interval $[a,f(a)]$ of $f$ to $[0,1]$. 
In concrete terms, 
$$P_{\X} = P \!: x\in(0,1)\mapsto \int_a^x\frac{du}{\X(u)},$$
and a similar formula stands for $P_{\Y}$ (with perhaps a different choice for the point $a$).  Since $f$ is the time-1 map of the 
flows of both $\X$ and $\Y$, the map $M_f$ commutes with the integer translations, and it is hence the lift of a circle diffeomorphism. 
The {\em Mather invariant} of $f$ is the class of this circle diffeomorphism (also denoted $M_f$) modulo composition with rotations 
on the left and right. (These naturally come from the choice of the point $a$ in (\ref{eq:P}); see \cite[\S2]{EN} for further details.)  
One says that this invariant is trivial if it coincides with the class of rotations.

Although the Mather invariant is not a genuine circle diffeomorphism (but an equivalence class of them), the total variation of the logarithm 
of its derivative is well defined, since pre/post-composition with rotations does not change its value. The next result that relates this value with 
the asymptotic variation was obtained in \cite{EN} for $C^2$ diffeomorphisms. The proof of this extended version is given in the Appendix I.

\medskip

\begin{thm} \label{thm-general}
For every $f \in \Diff^{1+\mathrm{bv},\Delta}_+ ([0,1])$, one has
$$\big| \var (\log DM_f) - \dist (f) \big| \leq | \log Df(0) | + | \log  Df(1 ) | .$$
\end{thm}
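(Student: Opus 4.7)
My plan is to prove the theorem in two separate inequalities, both using the drift formulation from Lemma~\ref{lem-Banach} applied to the affine derivative cocycle $c(f) := D^2f/Df$, which gives
\[ \dist(f) = \inf_{\psi \in L^1} \|c(f) - (\psi - U(f)\psi)\|_{L^1}. \]
The starting identity is
\[ \var(\log DM_f) = \int_{[b, f(b)]} \Bigl|\tfrac{\X'}{\X} - \tfrac{\Y'}{\Y}\Bigr|\,dx, \]
obtained by the change of variable $y = P_\X(x)$ in $M_f = P_\Y \circ P_\X^{-1}$ (using $DP_\X = 1/\X$ and $DP_\Y = 1/\Y$) and the $f$-invariance of the density $\X'/\X - \Y'/\Y$, which follows from $\log Df = (\log \X)\circ f - \log \X = (\log \Y)\circ f - \log \Y$.

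For $\dist(f) \leq \var(\log DM_f) + |\log Df(0)| + |\log Df(1)|$, I would construct an explicit near-optimizer of the drift. For $b \in (0,1)$ and small $\varepsilon > 0$, set
\[ \psi_\varepsilon := -\tfrac{\X'}{\X}\,\mathbf{1}_{[\varepsilon, b]} - \tfrac{\Y'}{\Y}\,\mathbf{1}_{[b, 1-\varepsilon]}, \]
which is the truncation of the (non-integrable) ideal trivializer of $c(f)$ to an $L^1$ function. A direct calculation shows that $c(f) - (\psi_\varepsilon - U(f)\psi_\varepsilon)$ vanishes on the ``interior'' regions where $f$ maps $[\varepsilon, b]$ into itself (and similarly on the right), leaving three nonzero contributions: a transition region $[f^{-1}(b), b]$ giving $\int_b^{f(b)}|\X'/\X - \Y'/\Y|\,dy = \var(\log DM_f)$ via the change of variable $y = f(x)$, and two truncation regions $[f^{-1}(\varepsilon), \varepsilon]$ and $[f^{-1}(1-\varepsilon), 1-\varepsilon]$ yielding $\int_{f^{-1}(\varepsilon)}^\varepsilon |\X'/\X|\,dx \to |\log Df(0)|$ and $\int_{f^{-1}(1-\varepsilon)}^{1-\varepsilon} |\Y'/\Y|\,dx \to |\log Df(1)|$ as $\varepsilon \to 0$.

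For the reverse inequality $\var(\log DM_f) \leq \dist(f) + |\log Df(0)| + |\log Df(1)|$, I would establish the ``one-step'' bound
\[ \var(\log DM_F) \leq \var(\log DF) + |\log DF(0)| + |\log DF(1)| \]
for arbitrary $F \in \Diff^{1+\mathrm{bv},\Delta}_+([0,1])$. Applying this to $F = f^n$ and dividing by $n$, and using $\var(\log DM_{f^n}) = n\var(\log DM_f)$ together with $\var(\log Df^n)/n \to \dist(f)$ and $|\log Df^n(i)| = n|\log Df(i)|$, yields the desired bound. The one-step inequality itself follows from a careful decomposition of $\int_0^1 |c(F)|\,dx$: restricting to a transition interval and using the $F$-invariance of $\X'/\X - \Y'/\Y$ produces a lower bound of $\var(\log DM_F)$, while the necessary boundary corrections come from the coboundary identity $\int_0^1 c(F)\,dx = \log DF(1) - \log DF(0)$.

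The main obstacle is the precise boundary analysis in $C^{1+\mathrm{bv}}$ regularity: the limits $\int_{f^{-1}(\varepsilon)}^\varepsilon |\X'/\X|\,dx \to |\log Df(0)|$ and its analogue at $1$ require the regularity of the extended Szekeres vector fields constructed in Appendix~I. In the hyperbolic case this follows from $\X(x) \sim Df(0)\cdot x$ near $0$, but the parabolic case (where $\X$ vanishes to higher order and $\log Df(0) = 0$) is delicate, since the naive estimate $\X'/\X \sim 1/x$ breaks down and one needs the precise behavior of $\X$ at the parabolic boundary point, which is established in the Appendix.
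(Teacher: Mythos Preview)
Your approach is genuinely different from the paper's, and the forward inequality $\dist(f)\le\var(\log DM_f)+|\log Df(0)|+|\log Df(1)|$ via the explicit near-coboundary $\psi_\varepsilon$ is an attractive idea. However, there is a regularity gap more serious than the ``boundary analysis'' you flag. The theorem is stated for $f\in\Diff^{1+\mathrm{bv},\Delta}_+([0,1])$, yet your whole framework rests on the $L^1$ cocycle $c(f)=D^2f/Df$ and on $\X'/\X$, $\Y'/\Y$ as $L^1$ functions. In $C^{1+\mathrm{bv}}$ regularity, $\log Df$ is only of bounded variation, so $c(f)$ is a signed measure, not an element of $L^1$; likewise $\log\X$ is merely $BV$ on fundamental intervals (item~4 of Proposition~\ref{p:improved-szek}), so $\X'/\X$ is a measure. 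The drift identity $\dist(f)=\inf_{\psi\in L^1}\|c(f)-(\psi-U(f)\psi)\|_{L^1}$ from Lemma~\ref{lem-Banach} is therefore unavailable in the stated generality. To salvage the argument you would have to replace $\psi_\varepsilon$ by an actual conjugating diffeomorphism $h_\varepsilon$ (with $\log Dh_\varepsilon$ equal to $-\log\X$ on $[\varepsilon,b]$ and $-\log\Y$ on $[b,1-\varepsilon]$, suitably normalized) and invoke (\ref{eq-conj}) directly; this is feasible but is not what you wrote. As it stands, your proof covers only the $C^{1+\mathrm{ac}}$ case.

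For comparison, the paper proceeds via the renormalization identity $M_f=T_{-m}\circ\psi_\Y^{-1}\circ f^{m+n}\circ\psi_\X\circ T_{-n}$, which yields at once that $|\var(\log DM_f)-\var(\log Df^{2N};[f^{-N}(a),f^{-N+1}(a)])|$ is bounded by $\var(\log\X)+\var(\log\Y)$ on fundamental intervals near the endpoints. Item~4 of Proposition~\ref{p:improved-szek} controls these, and Proposition~5.1 of \cite{EN} gives the convergence of $\var(\log Df^{2N};[f^{-N}(a),f^{-N+1}(a)])$ to $\dist(f)$. This handles both inequalities simultaneously and uses only variations, never absolute continuity. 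Your reverse inequality via the one-step bound applied to $F=f^n$ is close in spirit to the alternative argument in the Remark following the paper's proof (which uses conjugacy invariance and (\ref{eq-conj}) instead of powers); note that it relies on $\var(\log DM_{f^n})=n\,\var(\log DM_f)$, which is true (the Szekeres field of $f^n$ is $n\X$, so $M_{f^n}(t)=\tfrac{1}{n}M_f(nt)$) but which you assert without justification. Your sketch of the one-step bound itself, via ``the coboundary identity $\int_0^1 c(F)=\log DF(1)-\log DF(0)$'', is too vague: that identity concerns $\int c(F)$, not $\int|c(F)|$, and does not obviously produce the needed lower bound.
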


\medskip

The following corollary of the previous theorem will be very useful for us.

\medskip

\begin{cor} \label{cor-general-trivial}
For every $f \in \Diff^{1+\mathrm{bv},\Delta}_+ ([0,1])$ with trivial Mather invariant, one has
$$\dist (f) = | \log D f(0) | + | \log  Df(1) | .$$
\end{cor}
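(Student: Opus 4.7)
The proof will combine Theorem \ref{thm-general} with an elementary lower bound on $\var(\log Df^n)$ coming from the boundary values of the derivative.

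First, observe that triviality of the Mather invariant means that (some, hence every) representative of $M_f$ is a rotation, so $\log DM_f \equiv 0$ and $\var(\log DM_f) = 0$. Plugging this into Theorem \ref{thm-general} immediately yields the upper bound
$$\dist (f) \leq |\log Df(0)| + |\log Df(1)|.$$

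For the reverse inequality, I will not need triviality of the Mather invariant at all; the estimate follows directly from the definition of $\dist$ together with the assumption that $f$ has no interior fixed point. Indeed, for any function $g\colon [0,1]\to \R$ of bounded variation one has $\var(g) \geq |g(1)-g(0)|$. Applying this to $g = \log Df^n$ and using the chain rule at the fixed points $0$ and $1$ gives
$$\var(\log Df^n) \geq |\log Df^n(1) - \log Df^n(0)| = n\,|\log Df(1) - \log Df(0)|.$$
Dividing by $n$ and letting $n\to\infty$ yields
$$\dist (f) \geq |\log Df(1) - \log Df(0)|.$$
Now the hypothesis that $f$ has no fixed point in $(0,1)$ forces $f(x)-x$ to have a constant sign on the open interval. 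If $f(x)>x$ for all $x\in(0,1)$, then comparing with the tangent at each endpoint gives $Df(0)\geq 1$ and $Df(1)\leq 1$, so $\log Df(0)$ and $\log Df(1)$ have opposite signs (possibly zero), and therefore
$$|\log Df(1) - \log Df(0)| = |\log Df(0)| + |\log Df(1)|.$$
The case $f(x)<x$ is symmetric. Combining this with the upper bound from Theorem \ref{thm-general} completes the proof.

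The argument is quite short and requires no real obstacle once one notices that the quantity $|\log Df(0)|+|\log Df(1)|$ collapses to $|\log Df(0)-\log Df(1)|$ thanks to the opposite-sign structure imposed by the absence of interior fixed points.
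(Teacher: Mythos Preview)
Your proof is correct and follows essentially the same route as the paper: both obtain the upper bound from Theorem \ref{thm-general} and the lower bound from $\var(\log Df^n)\ge |\log Df^n(1)-\log Df^n(0)|$. The only difference is that you spell out explicitly why $|\log Df(1)-\log Df(0)|=|\log Df(0)|+|\log Df(1)|$ via the opposite-sign argument, whereas the paper writes this equality directly without comment.
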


\begin{proof} Triviality of the Mather invariant of $f$ is equivalent to 
\esp $\var (\log DM_f) = 0$. \esp By Theorem \ref{thm-general}, this implies 
$$\dist (f)  \leq | \log  Df(0) | + | \log  Df(1) | .$$
To show the reverse inequality, just notice that, for every $n \geq 1$, 
$$\var (\log Df^n) 
\geq | \log Df^n (1) - \log Df^n (0) | 
= n \big[ | \log Df(1) | + | \log Df (0) | \big].$$
Dividing by $n$ both sides of this inequality and letting $n \to \infty$ yields the desired estimate.
\end{proof}

\medskip

Mather invariant remains unchanged under $C^1$ conjugacy. Besides, together with the conjugacy classes of the germs at the endpoints, 
it totally describes $C^1$ conjugacy classes of diffeomorphisms in $ \Diff^{1+\mathrm{bv},\Delta}_+ ([0,1])$. It is known to be trivial if and only 
if the $C^{1}$ centralizer of the diffeomorphism is isomorphic to $\mathbb{R}$ (and coincides with the flow of the generating vector field). 
Otherwise, this centralizer reduces to $\mathbb{Z}$, and its generator is a root of the diffeomorphism. All these results were established by 
John Mather, and are carefully developed in Chapter V of Yoccoz' thesis \cite{yoccoz}.  (Proofs for $C^2$ diffeomorphisms 
therein extend with no changes to $C^{1+\mathrm{bv}}$ maps once the existence of generating vector fields is established.)


\section{The case of a trivial Mather invariant}
\label{section-trivial}

We consider a nonnecessarily faithful (yet nontrivial) action of $\mathbb{Z}^d$ by $C^{1+\mathrm{bv}}$ diffeomorphisms on the interval $[0,1]$ 
with no global fixed point in the interior. By Kopell's lemma \cite{libro}, every element acting nontrivially admits no fixed point in the interior,  
hence has a well-defined Mather invariant. 

Throughout this section, we assume that an element acting non trivially has a trivial Mather invariant. 
By Mather's theory, if this happens, then it holds for every nontrivial diffeomorphism in the image group. 
We will refer to this setting  just as {\em a $\Z^d$ action with trivial Mather invariant.} 

Assume first that the endpoints are parabolic fixed points for all elements.  (It is easy to see that this holds provided a 
nontrivial element has parabolic endpoints; see \cite[Proposition 8.1]{EN} for a short argument; alternatively, see the discussion below 
on hyperbolic fixed points and centralizers.) In this framework, by Corollary \ref{cor-general-trivial}, the asymptotic variation of every 
element vanishes. Therefore, Proposition \ref{prop-one} implies the following.

\medskip

\begin{lem} \label{lemma-trivial-parabolic}
Consider a $\Z^d$ action by $C^{1+\mathrm{bv}}$ (resp. $C^{1+\mathrm{ac}}$) diffeomorphisms of $[0,1]$ 
with no global fixed point in the interior and trivial Mather invariant. If all elements are parabolic at 
the endpoints, then there exists a $C^{1+\mathrm{bv}}$-continuous (resp. $C^{1+\mathrm{ac}}$-continuous) path of 
simultaneous conjugates $h_t f_i h_t^{-1}$ starting at the original action and finishing at the 
trivial one along which the $C^{1+\mathrm{bv}}$-norms of the generators do not increase.
\end{lem}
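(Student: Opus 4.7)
The plan is to reduce the statement directly to Proposition \ref{prop-one}. The key observation is that, under the twin hypotheses of trivial Mather invariant and parabolic endpoints, Corollary \ref{cor-general-trivial} forces the asymptotic distortion of each generator to vanish, and this vanishing is precisely what Proposition \ref{prop-one} requires.

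First I would verify that $\dist(f_i) = 0$ for every generator $f_i$. If $f_i$ acts trivially there is nothing to check, so assume $f_i$ is nontrivial. By Kopell's lemma, $f_i$ has no fixed point in the interior, hence $f_i \in \mathrm{Diff}^{1+\mathrm{bv},\Delta}_+([0,1])$ and its Mather invariant is well defined. The propagation principle from Mather's theory recalled just before the statement of the lemma ensures that this Mather invariant is trivial (the trivial Mather invariant assumption at one nontrivial element passes to all other nontrivial elements of the image group, since they share the same generating vector field). The parabolic hypothesis gives $\log Df_i(0) = \log Df_i(1) = 0$, so Corollary \ref{cor-general-trivial} immediately yields $\dist(f_i) = 0$.

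Second, I would apply Proposition \ref{prop-one} to the commuting family $f_1, \ldots, f_d$. This produces a $C^{1+\mathrm{bv}}$-continuous (resp. $C^{1+\mathrm{ac}}$-continuous) path of simultaneous conjugates $h_t f_i h_t^{-1}$ starting at the given $f_i$ and terminating at isometries of $[0,1]$. Since the only orientation-preserving isometry of a compact interval is the identity, the terminal action is the trivial one, as required. The non-increase of norms $\var(\log D(h_t f_i h_t^{-1})) \leq \var(\log Df_i)$ along the path is the second assertion of Proposition \ref{prop-one} and is inherited at no extra cost.

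The entire lemma is therefore a repackaging of Corollary \ref{cor-general-trivial} and Proposition \ref{prop-one}; no genuinely new obstacle arises in this subcase. The only point requiring a moment of care is the propagation of triviality of the Mather invariant from a single nontrivial element to all other nontrivial ones, but this is a standard feature of Mather's $C^1$-classification of interval diffeomorphisms and was invoked explicitly in the paragraph preceding the lemma.
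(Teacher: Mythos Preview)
Your proposal is correct and follows exactly the paper's own argument: the text immediately preceding the lemma says that under parabolic endpoints and trivial Mather invariant, Corollary~\ref{cor-general-trivial} forces $\dist(f_i)=0$, and then ``Proposition~\ref{prop-one} implies the following,'' which is precisely the reduction you carry out. Your additional remarks (Kopell's lemma for nontrivial $f_i$, the only isometry of $[0,1]$ being the identity) fill in the details the paper leaves implicit.
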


\medskip

To deal with hyperbolic fixed points, we first remind some elementary facts about germs of hyperbolic, 1-dimensional linear 
diffeomorphisms. We state them as a remark for future reference.

\begin{rem} \label{rem-affine} 
As it is very well known (see for instance \cite{GT}), the centralizer of a nontrivial linear germ of diffeomorphism 
of the real line fixing the origin coincides with the group of germs of  linear maps fixing the origin. 
Indeed, if $g$ commutes with $x \mapsto \lambda \, x$ then, for every $x \neq 0$, one has, for all $n \in \mathbb{Z}$,
$$g (x) = \frac{g (\lambda^n x)}{\lambda^n} = \left( \frac{g (\lambda^n x) - g(0)}{\lambda^n x - 0} \right) x.$$
Letting $n \to \infty$ or $n \to -\infty$ according to whether $\lambda < 1$ or $\lambda > 1$, respectively, 
the right-side expression converges to $Dg (0) \, x$, which shows that $g$ is linear.

Now, given $\alpha > 0$, we let $h^{\alpha}$ be the germ (at the origin) of  the map $x \mapsto x^{\alpha}$. Notice that this is 
not a $C^1$ diffeomorphism for $\alpha \neq 1$, but the only failure of differentiability arises at the origin (away from it, the 
map is actually a $C^{\infty}$ diffeomorphism). The crucial point that we will exploit is that $h^{\alpha}$ conjugates the linear 
map $x \mapsto \lambda \, x$ to $x \mapsto \lambda^{\alpha} \, x$, which is still a linear map but with a different multiplier.
\end{rem}

 Let us again consider a $\mathbb{Z}^d$-action on $[0,1]$, but this time we assume that an endpoint (say, the origin) is hyperbolic 
for a certain (equivalently, every nontrivial) 
group element $f$. If the action is by $C^2$ diffeomorphisms, then we may use a classical theorem of Shlomo Sternberg 
\cite{sternberg} in its sharp version (due to Yoccoz \cite{yoccoz}): there exists a germ of $C^2$ diffeomorphism $\hat{g}$ such that 
$\hat{g} f \hat{g}^{-1}$ is linear about the origin.\footnote{The Sternberg-Yoccoz theorem still holds for hyperbolic germs of $C^{1+\tau}$ 
diffeomorphisms (see for instance \cite{MW}). The use of this more general version allows extending our Theorem A from $C^2$ to 
$C^{1+\tau}$ commuting diffeomorphisms (with absolutely continuous derivative), with the exact same proof. In particular, 
this applies whenever the affine derivatives lies not only in $L^1$ but also in $L^p$ for some $p>1$.} 
By Remark \ref{rem-affine}, conjugacy by $\hat{g}$ transforms the centralizer of $f$ inside the group of germs (which contains the image 
group of $\Z^d$) into the group of linear transformations. Now, a conjugacy by $h^{\alpha}$ transforms this linear action into another 
action along which the multipliers of group elements at the origin change, and become closer to $1$ as $\alpha$ goes to zero. Finally, 
a conjugacy by $\hat{g}^{-1}$ transforms this new affine action into a new $\Z^d$ action by $C^2$ diffeomorphisms.

We may extend the (local) maps $h^{\alpha}$ and $\hat{g}$ above to $C^2$ diffeomorphisms of $(0,1]$ that coincide with the identity on 
a neighborhood of the right endpoint. We denote $\hat{g}^{\alpha} := (\hat{g}^{-1} h^{\alpha} \hat{g})$, and we state the relevant features of 
this procedure: Conjugacy by $\hat{g}^{\alpha}$ transforms the original $\Z^d$ action into another smooth $\Z^d$ action on $[0,1]$ 
for which the multipliers of group elements at the origin become closer to $1$. Moreover, the map that sends $\alpha$ to the 
$\Z^d$-action conjugated by $\hat{g}^{\alpha}$ is a continuous deformation (starting at $\alpha = 1$) of the original action. 
Furthermore, the Mather invariant of the new action remains trivial. 

The last point deserves some attention. A different view of the previous procedure is that we have changed the Szekeres vector field $\X = \Y$ of 
$f$ to another $C^1$ vector field that coincides with $\alpha \X$ in a neighborhood of the origin and remains untouched near the right endpoint. 
The new action restricted to this neighborhood consists just in integrating this new vector field $\alpha \X$ to the same times of integration of those of 
the original action in regard to $\X$. Since we always keep a $C^1$ vector field defined on the whole interval $[0,1]$, the Mather invariant remains trivial. 

If there is also hyperbolicity at the right endpoint, we simultaneously perform a similar deformation about it (otherwise, we keep 
untouched a neighborhood of this point). We let $g^{\alpha}$ be the resulting conjugating map that involves eventual deformation 
at both endpoints, and we summarize all of this in the lemma below.

\medskip

\begin{lem} Conjugacy by $g^{\alpha}$ yields a continuous deformation of the original action (in the parameter $\alpha \leq 1$)
along which the Mather invariant remains always trivial. Besides, after conjugacy by $g^{\alpha}$, multipliers (at the endpoints) 
change from a value $Df (\cdot)$ to $Df (\cdot)^{\alpha}$.
\end{lem}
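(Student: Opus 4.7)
The plan is to establish four things separately: (i) for each $\alpha \in (0,1]$, the map $g^{\alpha} f_i (g^{\alpha})^{-1}$ is a genuine $C^2$ diffeomorphism of $[0,1]$; (ii) its multiplier at a hyperbolic endpoint equals $Df_i(\cdot)^{\alpha}$; (iii) the Mather invariant of the conjugate action is still trivial; and (iv) the conjugate action depends $C^2$-continuously on $\alpha$, starting at the original action when $\alpha=1$. All four hinge on one algebraic observation: by Remark \ref{rem-affine}, the centralizer of $\hat{g}f\hat{g}^{-1}=L_\lambda$ inside the group of germs of diffeomorphisms fixing the origin is the group of linear germs, so every conjugate $\hat{g}f_i\hat{g}^{-1}$ is itself a linear germ $L_{\lambda_i}$ with $\lambda_i = Df_i(0)$.

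For (i) and (ii), I would work near the hyperbolic endpoint $0$. There the definition of $g^{\alpha}$ together with the observation above gives
\[
g^{\alpha}f_i(g^{\alpha})^{-1} \;=\; \hat{g}^{-1}\circ h^{\alpha}\circ L_{\lambda_i}\circ (h^{\alpha})^{-1}\circ \hat{g} \;=\; \hat{g}^{-1}\circ L_{\lambda_i^{\alpha}}\circ \hat{g},
\]
exactly because conjugating a linear map by $x\mapsto x^{\alpha}$ produces another linear map (the whole point of Remark \ref{rem-affine}). The non-$C^1$ pathology of $h^{\alpha}$ at $0$ is thereby exactly cancelled, and the right-hand side is as smooth as $\hat{g}$, i.e.\ $C^2$. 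The chain rule at $0$, using $\hat{g}(0)=0$, then yields $D(g^{\alpha}f_i(g^{\alpha})^{-1})(0) = \lambda_i^{\alpha} = Df_i(0)^{\alpha}$, which is (ii). The same calculation runs symmetrically near a hyperbolic right endpoint (with $g^{\alpha}$ left unchanged in a neighbourhood of a parabolic endpoint), and in the interior $g^{\alpha}$ is itself $C^2$ so conjugation preserves $C^2$ regularity. Assertion (i) follows.

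For (iii), the plan is to use the embedding-into-a-flow characterization of triviality of the Mather invariant recalled at the end of \S\ref{section-mather}: the original action lies inside the $C^1$ one-parameter group $\{\phi_t\}$ generated by the Szekeres vector field, with each $f_i=\phi_{t_i}$. Conjugating the entire flow by $g^{\alpha}$ yields the one-parameter group $\{g^{\alpha}\phi_t(g^{\alpha})^{-1}\}$ of homeomorphisms of $[0,1]$. Near $0$, the same computation as in the previous paragraph (applied now to $\phi_t$, whose germ in $\hat{g}$-coordinates is $L_{\lambda^t}$) gives $g^{\alpha}\phi_t(g^{\alpha})^{-1} = \hat{g}^{-1}\circ L_{\lambda^{\alpha t}}\circ \hat{g}$, so the conjugate flow is genuinely $C^1$ (in fact $C^2$) in $(t,x)$ near $0$; the same is true near the other hyperbolic endpoint, and in the interior conjugation by the $C^2$ map $g^{\alpha}$ trivially preserves the $C^1$-flow property. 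Hence the conjugate action embeds in a globally $C^1$ one-parameter group, which by Mather's classification forces every $g^{\alpha}f_i(g^{\alpha})^{-1}$ to have trivial Mather invariant.

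Finally, (iv) is straightforward: away from the endpoints, $\alpha\mapsto h^{\alpha}$ is $C^{\infty}$-continuous, while near the endpoints the explicit expression $\hat{g}^{-1}\circ L_{\lambda_i^{\alpha}}\circ \hat{g}$ is manifestly $C^2$-continuous in $\alpha$ through the continuity of $\alpha\mapsto \lambda_i^{\alpha}$, and the two are glued through a fixed smooth cutoff built into $g^{\alpha}$. The only conceptual obstacle in the whole argument is that $g^{\alpha}$ is \emph{not} a $C^1$ diffeomorphism at the hyperbolic endpoints, so the usual $C^1$-invariance of the Mather invariant does not apply directly; the way around it is precisely the remark that conjugation by $g^{\alpha}$ maps linear germs to linear germs, which is exactly what is needed to produce a \emph{global} $C^1$ conjugate flow and thereby preserve triviality of the Mather invariant.
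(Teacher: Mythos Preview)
Your proposal is correct and follows essentially the same approach as the paper. The paper does not give a separate formal proof of this lemma; the argument is the discussion immediately preceding it, which matches your (i)--(iv) breakdown: the key algebraic point is exactly that $h^{\alpha}$ sends the linear germ $L_{\lambda_i}$ to $L_{\lambda_i^{\alpha}}$ (Remark~\ref{rem-affine}), so the non-$C^1$ behaviour of $h^{\alpha}$ at the endpoint cancels and the conjugate is again $C^2$ with multiplier $\lambda_i^{\alpha}$. For the triviality of the Mather invariant, the paper phrases the argument in terms of the Szekeres vector field (the conjugated vector field coincides with $\alpha\,\X$ near the hyperbolic endpoint and remains globally $C^1$), whereas you phrase it in terms of the conjugated flow $\{g^{\alpha}\phi_t(g^{\alpha})^{-1}\}$ remaining a global $C^1$ one-parameter group; these are of course equivalent formulations of the same fact.
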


\medskip

We are now ready to implement the deformation in the general case of trivial Mather invariant. We stress that the conjugating maps 
$h_t$ we will obtain below do not belong to $\mathrm{Diff}^{1+\mathrm{ac}}_+ ([0,1])$ in case of hyperbolic fixed points, yet they conjugate 
the original action into another $C^{1+ac}$ one.

\medskip

\begin{prop} \label{prop-trivial-M}
For every $\Z^d$ action by $C^2$ diffeomorphisms of $[0,1]$ with no global fixed point in the interior and trivial 
Mather invariant, there exists a $C^{1+\mathrm{ac}}$-continuous path of simultaneous conjugates $h_t f_i h_t^{-1}$ starting at the original 
action and finishing at the trivial one along which the $C^{1+\mathrm{ac}}$-norms of the generators do not increase more than twice the 
$C^{1+\mathrm{ac}}$-norms of the original action.
\end{prop}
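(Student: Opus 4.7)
The plan is to reduce to the case of parabolic endpoints (handled by Lemma \ref{lemma-trivial-parabolic}) by progressively killing the hyperbolicity at the endpoints via conjugacy by the family $g^\alpha$, then invoke that lemma.

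First, I would apply Proposition \ref{prop-two} to the given action with a small $\varepsilon > 0$, producing a $C^{1+\mathrm{ac}}$-continuous path of simultaneous conjugates ending at an action $(\bar f_i)$ with
\[
\var(\log D\bar f_i) \leq \dist(f_i) + \varepsilon = |\log Df_i(0)| + |\log Df_i(1)| + \varepsilon,
\]
where the last equality comes from Corollary \ref{cor-general-trivial} (the Mather invariant being trivial). Along this first stage the $C^{1+\mathrm{bv}}$ norms never exceed $\var(\log Df_i)$, so the factor-of-$2$ bound is safely respected.

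Next, I would carry out the deformation $\alpha \mapsto g^\alpha \bar f_i (g^\alpha)^{-1}$, letting $\alpha$ decrease from $1$ towards $0$. By the preceding lemma, this is a continuous deformation preserving triviality of the Mather invariant and sending the endpoint multipliers $D\bar f_i(\cdot)$ to $D\bar f_i(\cdot)^\alpha$. Hence, by Corollary \ref{cor-general-trivial}, the asymptotic distortion of the $\alpha$-deformed action equals $\alpha\bigl(|\log Df_i(0)|+|\log Df_i(1)|\bigr)$, which shrinks to $0$; in the limit $\alpha \to 0^+$ the action is parabolic at both endpoints. To keep the $C^{1+\mathrm{bv}}$ norms tied to the (shrinking) asymptotic distortion along this deformation, I would interleave it with further applications of Proposition \ref{prop-two}, conjugating away any excess norm produced by the $g^\alpha$-step. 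Once an action that is parabolic at both endpoints (with trivial Mather invariant and arbitrarily small $C^{1+\mathrm{bv}}$ norm) has been reached, Lemma \ref{lemma-trivial-parabolic} finishes the construction by connecting it to the trivial action along a $C^{1+\mathrm{ac}}$-continuous path with non-increasing norms.

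The main obstacle is to make the middle stage rigorous: the conjugators $g^\alpha = \hat g^{-1} h^\alpha \hat g$ are not $C^1$ at the endpoints (since $h^\alpha:x\mapsto x^\alpha$ fails to be $C^1$ at $0$), so although each conjugate action is smooth and depends continuously on $\alpha$, one must verify carefully that this dependence is $C^{1+\mathrm{ac}}$-continuous, extend the deformation continuously to the limit $\alpha = 0$ (where $g^\alpha$ itself degenerates), and estimate the $C^{1+\mathrm{bv}}$ norm of the $\alpha$-conjugate in terms of the endpoint-multiplier change together with a controllable interior contribution. The factor of $2$ in the announced norm bound should reflect precisely this accounting: one copy of $\var(\log Df_i)$ from the Proposition \ref{prop-two}-type normalizations, plus at most one more picked up during the $g^\alpha$-deformation itself.
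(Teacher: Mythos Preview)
Your strategy --- alternate the $g^\alpha$-conjugation (to shrink the endpoint multipliers) with Proposition~\ref{prop-two} (to bring the norms back near the asymptotic distortion) --- is exactly the heuristic the paper spells out at the start of its proof. The obstacle you name is also the one the paper flags. But your proposed resolution (push $\alpha$ down to $0$, land on a parabolic action, then invoke Lemma~\ref{lemma-trivial-parabolic}) is not what the paper does, and it does not work as stated.

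The issue is not merely a regularity check on the $\alpha$-family: there is no limiting parabolic action to reach. As $\alpha\to 0^+$ the map $h^\alpha\!:x\mapsto x^\alpha$ degenerates (it is not even a homeomorphism at $\alpha=0$), and the conjugates $g^\alpha \bar f_i (g^\alpha)^{-1}$ do not converge in $C^{1+\mathrm{ac}}$ to diffeomorphisms with derivative~$1$ at the endpoints. Nor is there any a~priori bound on $\var(\log D(g^\alpha \bar f_i (g^\alpha)^{-1}))$ as $\alpha$ shrinks, so ``interleaving'' with Proposition~\ref{prop-two} does not by itself force the infinite concatenation to converge; the paper says explicitly that the naive process ``may keep trapped before reaching the end''.

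The paper's fix is to give up on ever reaching a parabolic action. One fixes a \emph{single} $\alpha<\tfrac12$ and iterates: each round applies $g^\alpha$ and then a Proposition~\ref{prop-two}-normalization, yielding composed conjugators $H_n=G_n\cdots G_1$ with the geometric bound
\[
\var\bigl(\log D(H_n f_i H_n^{-1})\bigr)\ \le\ (2\alpha)^n\bigl(|\log Df_i(0)|+|\log Df_i(1)|\bigr).
\]
The continuous path is then built not by concatenating the individual two-step deformations but by interpolating between consecutive $H_n$ and $H_{n+1}$ \emph{at the level of the log-derivative}, setting $Dh_t:=C_t\,(DH_n)^{s_t}(DH_{n+1})^{1-s_t}$ on each segment $t\in[1-\tfrac1n,\,1-\tfrac1{n+1}]$. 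Convexity of the affine-derivative cocycle then gives $\var(\log D(h_t f_i h_t^{-1}))\le(2\alpha)^n(\cdots)$ throughout, so the path extends $C^{1+\mathrm{ac}}$-continuously to $t=1$ and lands directly on the trivial action --- without a parabolic stage and without Lemma~\ref{lemma-trivial-parabolic} in the hyperbolic case. This log-derivative interpolation (borrowed from \cite{Na2}) is the missing technical device in your sketch.
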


\begin{proof} The case of parabolic endpoints is settled by Lemma \ref{lemma-trivial-parabolic}. For the non-parabolic case, 
the heuristic idea is as  follows: we first conjugate the action by the map $g^{\alpha}$ above, where $\alpha < 1$ is close-enough 
to $1$ so that the $C^{1+\mathrm{ac}}$-norms of the generators remain controlled. We next perform the conjugacy procedure of Proposition 
\ref{prop-two}
until we make the total variation of the logarithm of the derivatives of the generators smaller than twice the corresponding 
asymptotic variation, namely
$$2 \, \alpha \cdot \big[ | \log D f_i (0) | + | \log Df_i (1) | \big].$$
Notice that this is a genuine reduction only if $\alpha < 1/2$; however, this is not a major problem.  
(Alternatively, the factor $2$ could be easily replaced by any factor strictly larger than $1$, but we will avoid this argument.) 

Observe that the previous deformation occurs along a $C^{1+\mathrm{ac}}$-continuous path. The idea now is to repeat this argument many times 
so that, in the limit, we obtain the desired path by concatenation. Nevertheless, it is not clear that this process will actually converge (it 
may keep trapped before reaching the end). To overcome this problem we will slightly change our viewpoint using an idea from \cite{Na2}. 
Roughly speaking, instead of directly producing the conjugating maps, we concatenate between their affine derivatives via affine paths: 
this allows keeping a good control on the corresponding $L^1$-norms just by convexity (more precisely, by the triangle inequality).

For concreteness, fix {\em any} $\alpha < 1/2$ and, for each $n \geq 1$, consider the diffeomorphism $g_n$ defined by a formula similar 
to (\ref{eq:def-g_n}) but replacing $f_i$ by $\bar{f}_i := g^{\alpha} f_i (g^{\alpha})^{-1}$. For $N$ large enough, the value of each 
$$\var (\log D (g_N \bar{f}_i g_N^{-1})) = \var (\log D (g_N g^{\alpha} f_i (g_N g^{\alpha})^{-1}) )$$
becomes smaller than or equal to
$$2 \, \dist (\bar{f}_i) = 2 \alpha \, \big[ |\log Df_i (0)| + |\log Df_i (1)| \big] .$$
Set $G_1 := g_N g^{\alpha}$, and consider its affine derivative $\frac{D^2 G_1}{D G_1}$. Although this is not an $L^1$ function, 
the $L^1$-norm of each 
$$\frac{D^2 G_1}{D G_1} \circ f_i \cdot Df_i + \frac{D^2 f_i}{D f_i} - \frac{D^2 G_1}{D G_1}$$
is finite and, actually, smaller than or equal to \, $2 \alpha \, [ | \log Df_i (0) | + | \log Df_i (1) | ]$. Notice that this $L^1$-norm is nothing but 
$$\var (\log D (G_1 f_i G_1^{-1}) ).$$

Repeat this procedure starting with the $\Z^d$ action with generators $G_1 f_i G_1^{-1}$. One thus obtains a map $G_2$ for which 
$$\frac{D^2 G_2}{D G_2} \circ (G_1 f_i G_1^{-1}) \cdot D (G_1 f_i G_1^{-1}) 
+ \frac{D^2 (G_1f_iG_1^{-1})}{D (G_1f_iG_1^{-1})} - \frac{D^2 G_2}{D G_2}$$
is smaller than or equal to 
$$2 \alpha \, \big[ | \log D (G_1 f_i G_1^{-1}) (0) | + | \log D (G_1 f_i G_1^{-1})  (1) | \big]
= 4 \alpha^2 \, \big[ | \log Df_i (0) | + | \log Df_i (1) | \big].$$
Again, this $L^1$-norm is nothing but 
$$\var (\log D ((G_2G_1) f_i (G_2G_1)^{-1}) ).$$

Proceeding this way, we get a sequence of conjugating maps $H_n := G_n \cdots G_2 G_1$ for which 
$$\var (\log D (H_n f_i H_n^{-1})) \leq (2 \alpha)^n \, \big[ | \log Df_i (0) | + | \log Df_i (1) | \big].$$
Now, for $t \in [1-1/n, 1-1/(n+1)]$, let $h_t$ be defined by $h_t (0) = 0$ and 
$$D h_t = C_t \, (D H_n)^{s_t} \, (DH_{n+1})^{1 - s_t},$$ 
where \, $s_t := (1+nt-n) \, (n+1) $ \,
is the affine function in $t$ with value $0$ at $1-1/n$ and $1$ at $1 - 1/(n+1)$,  
and $C_t$ is the unique constant for which
\, $\int_0^1 Dh_t (x) \, dx = 1.$ \,
Then, 
$$D \log D h_t = s_t \log DH_n + (1 - s_t) \log DH_{n+1}.$$
By the cocycle identity of the affine derivative $D^2/D = D (\log D)$, 
this implies that the $L^1$-norm of 
$$\frac{D^2 h_t}{D h_t} \circ f_i \cdot D f_i + \frac{D^2 f_i}{D f_i} - \frac{D^2 h_t}{Dh_t}$$
is smaller than or equal to the sum of the $L^1$-norms of 
$$s_t \left[ \frac{D^2 H_n}{D H_n} \circ f_i \cdot D f_i + \frac{D^2 f_i}{D f_i} - \frac{D^2 H_n}{DH_n} \right]
\quad \mbox{and} \quad 
(1 - s_t) \left[ \frac{D^2 H_{n+1}}{D H_{n+1}} \circ f_i \cdot D f_i + \frac{D^2 f_i}{D f_i} - \frac{D^2 H_{n+1}}{DH_{n+1}} \right].$$
This is bounded from above by
$$s_t \, ( 2 \alpha)^n \, \big[ | \log Df_i (0) | + | \log Df_i (1) | \big] 
+ (1 - s_t) \, ( 2 \alpha)^{n+1} \, \big[ | \log Df_i (0) | + | \log Df_i (1) | \big] ,$$
hence by 
$$( 2 \alpha)^n \, \big[ | \log Df_i (0) | + | \log Df_i (1) | \big].$$
Summarizing, for $t \in [1-1/n, 1-1/(n+1)]$, 
$$\var (\log D (h_tf_ih_t^{-1})) \leq ( 2 \alpha)^n \, \big[ | \log Df_i (0) | + | \log Df_i (1) | \big],$$
and this estimate allows closing the proof.
\end{proof}


\section{The case of a nontrivial Mather invariant}
\label{section-no-trivial}

Again, we consider a non necessarily faithful $\mathbb{Z}^d$ action by $C^{1+\mathrm{ac}}$ diffeomorphisms of 
$[0,1]$ with no global fixed point in the interior, but we now assume that the Mather invariant is nontrivial. 
By Mather's theory, this implies that the image group is isomorphic to $\mathbb{Z}$. We let $f$ be the 
generator of the image group.  As we already mentioned, we would like to deform $f$ and, simultaneously, 
the whole action. However, the $C^{1+\mathrm{bv}}$-norm of $f$ may be very large, and having no control for it 
would lead to loosing any control for the deformation. 

To solve this problem, we apply Proposition \ref{prop-two} to the original action in which we include $f$ as a generator. Notice 
that since the Mather invariant of nontrivial elements is nontrivial, their asymptotic variation is positive. Proposition \ref{prop-two} 
restated as follows will imply that, at the end, the corresponding (conjugate) $f$ attains a small $C^{1+\mathrm{bv}}$-norm.

\medskip

\begin{lem} \label{prop-no-trivial-M}
Let $f_1,\ldots, f_{\ell}$ be $C^{1+\mathrm{bv}}$ commuting diffeomorphisms of $[0,1]$ (not necessarily 
generating a group isomorphic to $\mathbb{Z}^{\ell}$) so that the Mather invariant of the action is not trivial. 
Then there exists a path $(h_t)_{t \in [0,1]}$ of $C^{1+\mathrm{bv}}$ diffeomorphisms of $[0,1]$ starting 
at the identity and such that, for each $1\leq i \leq \ell$, the conjugates $h_t \circ f_i \circ h_t^{-1}$ form a continuous path of 
$C^{1+\mathrm{bv}}$ diffeomorphisms, each of which has $C^{1+\mathrm{bv}}$-norm smaller than or equal to that of 
the corresponding $f_i$, and such that
$$\var \big( \log D (h_1 \circ f_i \circ h_1^{-1}) \big) \leq 2 \, \dist (f_i).$$
If all the $f_i$ are $C^{1+\mathrm{ac}}$, then this deformation occurs along $C^{1+\mathrm{ac}}$ diffeomorphisms, and is continuous for the 
$C^{1+\mathrm{ac}}$-topology.
\end{lem}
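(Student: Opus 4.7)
The plan is to recognize this lemma as a direct consequence of Proposition~\ref{prop-two}, applied with a judiciously chosen parameter $\varepsilon$. The issue is that Proposition~\ref{prop-two} only yields $\var(\log D(h_1 f_i h_1^{-1}))\leq \dist(f_i)+\varepsilon$, so to upgrade the bound to $2\,\dist(f_i)$ one needs $\varepsilon\leq \dist(f_i)$ for every $i$ for which the right-hand side is nontrivial. This forces the preliminary task of showing that $\dist(f_i)>0$ whenever $f_i\neq \mathrm{id}$.

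For this, recall from the beginning of the section that nontriviality of the Mather invariant of the action forces, via Mather's theory, the image group to be cyclic with some generator $f$, so every $f_i=f^{k_i}$; moreover each nontrivial $f_i$ itself has nontrivial Mather invariant. Fix such a nontrivial $f_i$. If both endpoints are parabolic, Theorem~\ref{thm-general} gives $\dist(f_i)=\var(\log DM_{f_i})>0$. Otherwise at least one endpoint is hyperbolic; since $f_i$ has no interior fixed point, $\log Df_i(0)$ and $\log Df_i(1)$ have opposite (non-strict) signs, so the same estimate used in the proof of Corollary~\ref{cor-general-trivial},
$$\var(\log Df_i^n)\;\geq\;\bigl|\log Df_i^n(1)-\log Df_i^n(0)\bigr|\;=\;n\bigl(|\log Df_i(0)|+|\log Df_i(1)|\bigr),$$
yields $\dist(f_i)\geq |\log Df_i(0)|+|\log Df_i(1)|>0$ after dividing by $n$ and letting $n\to\infty$.

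Having established positivity, set $\varepsilon:=\min\{\dist(f_i):f_i\neq\mathrm{id}\}$, which is strictly positive (if all $f_i$ are trivial the lemma is vacuous with $h_t\equiv\mathrm{id}$). Applying Proposition~\ref{prop-two} with this $\varepsilon$ then produces a $C^{1+\mathrm{bv}}$-continuous (resp.\ $C^{1+\mathrm{ac}}$-continuous) path $(h_t)_{t\in[0,1]}$ with $h_0=\mathrm{id}$, such that along the path $\var(\log D(h_t f_i h_t^{-1}))\leq \var(\log Df_i)$, and at the endpoint
$$\var(\log D(h_1 f_i h_1^{-1}))\;\leq\;\dist(f_i)+\varepsilon\;\leq\;2\,\dist(f_i).$$
For trivial $f_i$ the conjugate is the identity and the desired bound holds automatically ($0\leq 0$). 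The only nontrivial content beyond invoking Proposition~\ref{prop-two} is the lower bound on $\dist(f_i)$; once the fundamental inequality of Theorem~\ref{thm-general} is in hand, this step is essentially formal, and the heavy lifting—constructing the controlled path of conjugates via the averaging procedure of \S\ref{section-distortion}—is already carried out in the proof of Proposition~\ref{prop-two}.
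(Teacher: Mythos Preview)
Your proposal is correct and follows exactly the approach the paper intends: the paper presents this lemma as ``Proposition~\ref{prop-two} restated,'' noting just beforehand that ``since the Mather invariant of nontrivial elements is nontrivial, their asymptotic distortion is positive,'' and you have correctly filled in the details of this positivity (via Theorem~\ref{thm-general} in the parabolic case and the elementary endpoint estimate in the hyperbolic case) so as to choose $\varepsilon=\min\{\dist(f_i):f_i\neq\mathrm{id}\}>0$ in Proposition~\ref{prop-two}.
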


\medskip

We are now in position to proceed to the whole deformation. 

\vspace{0.1cm}

\begin{prop} \label{correct}
Assume that a $\mathbb{Z}^d$ action by $C^{1+\mathrm{bv}}$ (resp. $C^{1+\mathrm{ac}}$) diffeomorphisms of $[0,1]$ with no global 
fixed point in the interior has a nontrivial Mather invariant. Then there is a continuous path of actions of $\mathbb{Z}^d$ starting at the 
given one and ending at the trivial action which is continuous with respect to the $C^{1+\mathrm{bv}}$ (resp. $C^{1+\mathrm{ac}}$)-topology. 
Besides, along this path, the $C^{1+\mathrm{bv}}$  (resp. $C^{1+\mathrm{ac}}$)-norm of the generators remains bounded from above by 
twice their $C^{1+\mathrm{bv}}$ (resp. $C^{1+\mathrm{ac}}$)-norm for the original action.
\end{prop}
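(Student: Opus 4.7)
The image group is isomorphic to $\mathbb{Z}$ by Mather's theory; write $f$ for its generator and $f_i = f^{n_i}$ for integers $n_i$. The plan has two stages: first move into coordinates in which $f$ itself has a small $C^{1+\mathrm{bv}}$ norm (via the conjugation procedure of Lemma \ref{prop-no-trivial-M}), and then run a direct linear homotopy of $\log Df$ to reach the identity.

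For the first stage, I would apply Lemma \ref{prop-no-trivial-M} to the enlarged commuting family $\{f_1,\ldots,f_d,f\}$. This produces a conjugation path $(h_t)_{t\in[0,1]}$ with $h_0 = \mathrm{id}$ along which $\var(\log D(h_t f_i h_t^{-1})) \leq \var(\log Df_i)$, and such that the final conjugates $\bar{f}_i := h_1 f_i h_1^{-1}$ and $\bar{f} := h_1 f h_1^{-1}$ satisfy
$$\var(\log D\bar{f}) \leq 2\,\dist(f), \qquad \var(\log D\bar{f}_i) \leq 2\,\dist(f_i).$$
Commutation relations are preserved, so $\bar{f}_i = \bar{f}^{n_i}$.

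For the second stage, define $(\bar{f}_s)_{s\in[0,1]}$ by
$$\log D\bar{f}_s(x) := (1-s)\log D\bar{f}(x) + c_s,$$
where $c_s$ is the unique constant making $\int_0^1 D\bar{f}_s\,dx = 1$. Then $\bar{f}_0 = \bar{f}$, $\bar{f}_1 = \mathrm{id}$, the affine derivative satisfies $D^2\bar{f}_s/D\bar{f}_s = (1-s)\,D^2\bar{f}/D\bar{f}$ pointwise (so its $L^1$ norm decreases linearly in $s$ and depends continuously on $s$), and $\var(\log D\bar{f}_s) = (1-s)\var(\log D\bar{f})$. Setting $\bar{f}_{i,s} := \bar{f}_s^{n_i}$ gives a continuous family of commuting $\mathbb{Z}^d$-actions from $(\bar{f}_i)$ at $s=0$ to the trivial one at $s=1$, along which
$$\var(\log D\bar{f}_{i,s}) \leq |n_i|\,\var(\log D\bar{f}_s) \leq 2(1-s)\,|n_i|\,\dist(f) = 2(1-s)\,\dist(f_i) \leq 2\,\var(\log Df_i),$$
using $\dist(f^n) = |n|\,\dist(f)$ together with (\ref{eq-control}).

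Concatenating the two stages yields the desired path from the original action to the trivial one, with generator norms bounded by $2\,\var(\log Df_i)$ throughout (the first stage gives the tighter bound $\var(\log Df_i)$, and the second stage gives the factor $2$). The point I expect to be most delicate is checking continuity of the concatenated path at the splice and of $s\mapsto\bar{f}_s^{n_i}$ in the $C^{1+\mathrm{ac}}$ topology: the former reduces to matching endpoints (which is immediate from $\bar{f}_i = \bar{f}^{n_i}$), and the latter to standard chain-rule estimates for compositions, which are uniform in $s$ because the $C^{1+\mathrm{bv}}$ norm of $\bar{f}_s$ is controlled uniformly.
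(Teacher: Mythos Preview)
Your proposal is correct and follows essentially the same route as the paper's proof: apply Lemma~\ref{prop-no-trivial-M} to the enlarged family $\{f_1,\dots,f_d,f\}$ to bring the generator $f$ to small norm, then run the linear homotopy on $\log D\bar f$ and use $\dist(f^{n})=|n|\,\dist(f)$ together with~\eqref{eq-control} to control the norms of the powers. The notation differs (the paper writes $F$, $m_i$, $F_t$ where you write $\bar f$, $n_i$, $\bar f_s$), but the argument and the estimates are identical.
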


\begin{proof} We first apply the previous lemma for $\ell := d+1$ letting $f_{d+1} := f$, where $f$ is the generator of the image group. 
The outcome is a path of conjugate actions by $C^{1+\mathrm{bv}}$ (resp. $C^{1+\mathrm{ac}}$) diffeomorphisms $h_t$ along which the $C^{1+\mathrm{ac}}$ 
norms do not increase and, at the end, finishes with an action for which the conjugate $F := h_1 f h_1^{-1}$ of $f$ satisfies 
$$\var (\log DF) \leq 2 \, \dist (f).$$
Now, for each $1 \leq i \leq d$, there exists an integer $m_i$ such that $f_i = f^{m_i} $. 
Using the homogenity of $\dist$ (see (\ref{eq-hom})), for those $i$ for which $m_i \neq 0$, we obtain
\begin{equation}\label{eq:est-F}
\var (\log DF) \leq 2 \, \dist (f) =
\frac{2\, \dist (f^{m_i})}{|m_i|} = \frac{2\, \dist(f_i)}{|m_i|}.
\end{equation} 
Let $F_t$ be the homotopy of $F$ to the identity that is linear on $\log D (\cdot)$.  
More precisely, we let $F_t$ be so that $F_t (0) = 0$ and 
$$DF_t (x) := \frac{ e^{(1-t) \log DF(x)}} {\int_0^1 e^{(1-t) \log DF (y)} dy}$$
We concatenate the path of conjugates by $h_t$ of the given action 
with the path of actions that associate to the $i^{th}$ generator of $\mathbb{Z}^d$ the map $F_t^{m_i}$. 
Since, for a certain constant $c$,  
$$\log DF_t = (1-t) \log DF + c,$$
we have 
\,\, $\var (\log DF_t) = (1-t) \, \var (\log DF).$ \,\,
Therefore, by (\ref{eq:est-F}),
$$\var (\log DF_t^{m_i}) \leq | m_i | \, \var (\log DF_t)
= (1-t) \, | m_i | \, \var (\log DF) \leq 2 \, (1-t) \, \dist (f_i),$$
and this yields the desired path.  
\end{proof}

\begin{rem}\label{salida} 
It is worth pointing out that the deformation technique above applies in all cases where the image group is isomorphic to $\mathbb{Z}$. Thus, 
under this assumption, if the Mather invariant is trivial, both the methods of \S \ref{section-trivial} and \S \ref{section-no-trivial} are suitable 
to deform the action into the trivial one. For technical reasons that will be clarified below, we will prefer the last of these two deformations.
\end{rem}


\section{End of the proof of Theorem \ref{t:A}}
\label{section-todo}

We now proceed to the proof of Theorem \ref{t:A} in the general case.

\subsection{The case of the interval}
Let $f_1,\ldots,f_d$ be commuting $C^2$ diffeomorphisms of $[0,1]$, and denote by $\mathcal{I}$ the 
family of connected components $I$ of the complement of the set of their common fixed points. Since the $C^{1+\mathrm{ac}}$-norm is invariant under 
affine rescaling, to the action restricted to each $\bar I$ we may apply either Proposition \ref{prop-trivial-M} (in case of a trivial 
Mather invariant and higher rank image group), or Proposition \ref{correct} (in case of nontrivial Mather invariant), or Remark \ref{salida} (in case of trivial 
Mather invariant and image group isomorphic to $\mathbb{Z}$). Doing so, we obtain $C^{1+\mathrm{ac}}$-continuous paths of actions on each $\bar I$ 
ending at the trivial action along which the $C^{1+\mathrm{ac}}$ norms of the generators are always bounded from above by \esp $2 \,  \var (\log Df_i |_I ).$ \esp 
Putting all these deformations together, we claim that we obtain a path of $C^{1+ac}$ actions $(F_t)_i$ ending at the trivial action. Notice that in presence of interior hyperbolic fixed points, conjugacies on the left and right keep the action smooth provided we chose along the 
path the same parameter $\alpha$ for the conjugator $x \mapsto x^{\alpha}$ (in case of a higher-rank image group; see \S \ref{section-trivial}) and/or 
the same parameter for the linear homotopy (in case of a image group isomorphic to $\mathbb{Z}$; see \S \ref{section-no-trivial}). 
For this it is worth to remark that if the image group is higher-rank on a fixed interval $I$ with a hyperbolic endpoint, then it is also 
higher-rank at the fixed interval which is on the other side of this point (unless this point is $0$ or $1$).

Continuity of this path is straightforward to prove. Indeed, given $\varepsilon > 0$, we can choose a finite subfamily $\mathcal{J}$ 
of $\mathcal{I}$ such that, for each $i$,  
$$\sum_{I \notin \mathcal{J}} \var (\log Df_i; I) < \frac{\varepsilon}{4}.$$
Let $N$ denote the cardinal of $\mathcal{J}$. 
Given a time $t_0 \in [0,1]$, we can choose $\delta > 0$ so that, for all $|t-t_0| < \delta$, all $I \in \mathcal{J}$ 
and all $i$, the restriction to $I$ of $(F_t)_i$ is $\varepsilon/2N$-close to that of $(F_{t_0})_i$ in the $C^{1+\mathrm{ac}}$ topology. 
Since along the deformation the $C^{1+\mathrm{ac}}$-norm of the $i^{th}$ generator 
restricted to each $I$ remains bounded from above by $2 \, \var (\log Df_i; I)$, this implies that $(F_t)_i$ and $(F_{t_0})_i$ are at a distance at most 
$$2 \sum_{I \notin \mathcal{J}} \mathrm{var} (\log Df_i ; I) + \frac{\varepsilon}{2N} \cdot |\mathcal{J}| < \varepsilon,$$
thus showing continuity at $t_0$.

\subsection{The case of the circle}

Suppose now that $f_1,\ldots,f_d$ are commuting $C^{1+\mathrm{ac}}$ circle diffeomorphisms. If one of them has irrational rotation number, then the 
existence of a $C^{1+\mathrm{ac}}$-continuous path of conjugates $h_t f_i h_t^{-1}$ ending at an action by rotations follows from \cite[Main Theorem]{Na1}, 
which may be seen as an application of Proposition \ref{prop-one}. (Just notice that, although this result is stated for faithful actions in 
\cite{Na1}, it does not use faithfulness along the proof.) The key point here is knowing that the asymptotic variation vanishes, and for  
this it is crucial to assume $C^{1+\mathrm{ac}}$ regularity rather than $C^{1+\mathrm{bv}}$ (see \cite[Theorem 2]{Na1} on this).

\medskip

The case where all the $f_i$ have a rational rotation number is much less trivial than expected. In this situation, the rotation number function 
$\rho$ yields a group homomorphism into $\mathbb{T}^1$ with finite image (see \cite{libro} for this and other structure results used below). 
One is then tempted to apply the arguments of the interval case to the action of $ker (\rho)$, which is the finite-index subgroup formed by 
the elements having fixed points 
(and that, actually, have common fixed points). Nevertheless, this requires several adjustments.

Let $\Gamma$ be the group generated by $f_1,\ldots,f_d$ and $n$ be the cardinality of $\rho(\Gamma)$. 
Let $f_*$ be such that $\rho(f_*) = 1/n$; in particular, $\rho (f_*)$ generates the image group $\rho (\Gamma)$. We assume that $n \geq 2$, as $n=1$ 
is essentially the same as the case of the interval and can be settled in a similar way: one should just take care in preserving the same multiplier at the 
endpoints along the deformation in case this comes from a hyperbolic fixed point in the circle, and this is ensured by the method we have employed. 

Every group element uniquely writes as a product $f_*^{i} \bar{f}$, with $0 \leq i < n$ and $\rho (\bar{f}) = 0$. 
Besides, if $p_0$ denotes a point that is fixed by all elements in $ker (\rho)$ then, 
letting $p_i \!:=\! f_*^i (p_0)$, all the intervals $[p_i, p_{i+1}]$ are fixed by these elements. 

Assume for a while that the group $ker (\rho)$ admits no global  fixed point in the interior of $[p_0,p_1]$. By Kopell's lemma, this 
is the case of every element therein acting nontrivially. 
Then there are two cases to consider.

\medskip

\noindent{\bf The action of $ker (\rho)$ on $[p_0,p_1]$ has a trivial Mather invariant.} If the endpoints are parabolic, then we are in the case 
of vanishing asymptotic variation  for the action on $[p_0,p_1]$ and, by commutativity, on each $[p_i,p_{i+1}]$. This easily implies that the 
asymptotic variation of each circle diffeomorphism $f_j$ vanishes, which allows using Proposition~\ref{prop-one} to obtain a deformation 
of the action on the circle to an action by a finite-order rotation.
If $p_0$ is hyperbolic for a certain element in $ker (\rho)$, then this is also the case for every nontrivial element 
in $ker (\rho)$, and (because of the commutativity with $f_*$) this also holds at every point $p_i$. Conversely, if some $p_i$ 
is hyperbolic, then $p_0$ also is. We then conjugate by maps of type 
$g^{\alpha}$ as before at each of these points in an equivariant way. Notice that this can be done because, by the Sternberg-Yoccoz' 
theorem, there is a (unique) linear coordinate around each of these points, and $f_*$ must conjugate the one at $p_{i}$ into that at 
$p_{i+1}$ by commutativity. In this way, we can build a $C^{1+\mathrm{ac}}$-continuous path of conjugate actions (with conjugating maps 
that are no longer of class $C^{1+\mathrm{ac}}$) along which asymptotic variation becomes smaller and smaller, and the concatenation 
trick of the corresponding affine derivatives allows concluding the proof as in \S \ref{section-trivial}.

\medskip

\noindent{\bf The action of $ker (\rho)$ on $[p_0,p_1]$ has a nontrivial Mather invariant.} In this case, the restriction 
of $ker(\rho)$ to $[p_0,p_1]$ is either trivial, in which case the group is finite, hence conjugate to a group of rotations (this 
conjugacy can be obviously achieved along a path),  or generated by a single map. In the last case, we would like to apply the 
argument of \S \ref{section-no-trivial}. Nevertheless, we cannot proceed so easily since we need to preserve the equivariance 
under the action of $f_*$. To do this, we let $f$ be the diffeomorphism of $[p_0,p_1]$ that generates the action of $ker (\rho)$ on 
$[p_0,p_1]$. Notice that $f_*^n |_{[p_0,p_1]} = f^k$ for a certain integer $k \neq 0$. We start by conjugating the action
so that, in the end, 
all the intervals $[p_i, p_{i+1}]$ have  the same length. Then, we conjugate 
again
so that $f_*$ 
becomes a rotation from $[p_i, p_{i+1}]$ onto $[p_{i+1},p_{i+2}]$ for $0 \leq i < n-1$ 
(this can be easily achieved by means of a cohomological equation on $\log (Df)$). 
Notice that all this procedure can be done through conjugacy by a $C^{1+\mathrm{ac}}$-continuous path of diffeomorphisms 
$h_t$. If we denote $\hat{f}_*$ (resp. $\hat{f}$) the conjugate by $h_1$ of $f_*$ (resp. $f$, where it is defined), 
then $\hat{f}_* |_{[p_{n-1},p_n]}$ becomes a composition of a rotation and $\hat{f}^k$.

Now we apply Proposition \ref{prop-two} in order to conjugate the action of $\ker (\rho)$ on $[p_0,p_1]$ along a path so 
that the $C^{1+\mathrm{ac}}$-norm of $\hat{f}$ becomes very close to $\dist (\hat{f})$. We extend this deformation to the 
remaining intervals via conjugacy by the corresponding rotations. Next, as in \S \ref{section-no-trivial}, we deform the 
conjugate version of $\hat{f}$ along its graph until reaching the identity, and we extend this deformation to the rest of the 
intervals again via conjugacy by rotations. Finally, we extend this deformation to $\hat{f}_*$ so that it coincides with 
a rotation except for the last interval, where it is forced to coincide with the composition of a rotation with the $k^{th}$ power 
of the corresponding deformed version of $\hat{f}$. We leave to the reader to check that this deformation is well behaved 
(with a good control on the $C^{1+\mathrm{ac}}$-norm)
 and ends at
 an action by a single rotation of order $n$.

\medskip

Now we no longer assume that $ker (\rho)$ acts with no global fixed point in the 
interior of $[p_0,p_1]$. In this case, we have a countable family $\mathcal{I}$ of closed intervals $I$ 
in $[p_0,p_1]$ with disjoint interior on each of which $ker (\rho)$ acts with no global fixed point inside. It is not hard to see that we may apply 
the arguments above to the restriction of the action on the union of intervals $I \cup f_* (I) \ldots \cup f_*^{n-1} (I)$. (Indeed, most of our results 
work  verbatim for non-connected compact 1-manifolds...) Pasting together the corresponding deformations along different $I \in \mathcal{I}$ 
yields the desired $C^{1+\mathrm{ac}}$-continuous path. The only delicate issue is, again, that of the multipliers at the hyperbolic periodic 
points, but this is still ensured by the uniqueness of linear coordinates around them (and the fact that, in case of a trivial Mather invariant, 
we prefer to deform along the graph of the generator instead of using the asymptotic variation; see Remark \ref{salida}).

\medskip

In all cases, we have connected the original action with an action by isometries. Since any two actions by rotations can be connected just 
by moving the angles, this allows connecting any two $\Z^d$ actions, thus completing the proof.


\section{
The
proof of Theorem \ref{t:B}}
\label{s:B}

The key ingredient in the proof of Theorem \ref{t:B} is the following result, which corresponds to Proposition 8.4 of \cite{EN}:

\vspace{0.1cm} 

\begin{prop}
\label{p:C1toC2} 
Let $X$ be a $C^1$ vector field on $[0,1]$ with flow $(f^t)_{t\in\R}$. Suppose that the set of times $t$ for which $f^t$ is a $C^2$ 
diffeomorphism is a dense subgroup of $\R$. Then there exists a sequence of $C^2$ diffeomorphisms $h_n$ such that $(h_n)_*X$ converges 
in the $C^1$ sense towards a $C^2$ vector field $\tilde X$ and that, for every $C^2$ diffeomorphism $f^\tau$ of the flow of $X$, the conjugate 
maps $h_n\circ f^\tau\circ h_n^{-1}$ converge in the $C^2$ sense towards the time-$\tau$ map of~$\tilde X$.
\end{prop}

\vspace{0.1cm} 

We may now proceed to the proof of Theorem \ref{t:B}. As we pointed out in \S \ref{th-B}, we restrict to the case $d=2$ just for simplicity. 
Let hence $(f_1,f_2)$ be a pair of commuting $C^2$ diffeomorphisms of $[0,1]$ and let $\eps>0$. 

\vspace{0.1cm}

\begin{lem}\label{lem-AB}
There exists a finite subdivision $0=a_0<\dots<a_n=1$ 
of $\, [0,1]$ such that: 
\begin{itemize}
\item at every $a_i$ in the interior of $[0,1]$, both $f_1$ and $f_2$ are $C^2$-tangent to the identity (in particular, the $a_i$'s are common fixed points of $f_1$ 
and $f_2$);
\item on every $[a_i,a_{i+1}]$, either $\|f_1-\mathrm{id}\|_2 \leq \eps$ and $\|f_2-\mathrm{id}\|_2 \leq \eps$ (such an interval will be said to be of type A), or $f_1$ 
and $f_2$ have no common interior fixed point at which both are $C^2$-tangent to the identity (this will be referred to as an interval of type B).
\end{itemize}
\end{lem}

\begin{proof}
We define the $a_i$'s by induction. Let $a_0=0$. Assume $a_i<1$ has been defined for some $i\in\N$. Then: 
\begin{itemize}
\item if there exists $a\in(a_i,1]$ such that $\|f_1 - \mathrm{id}\|_2 \leq \eps$ and $\|f_2 - \mathrm{id}\|_2 \leq \eps$ on $[a_i,a]$ 
and that $f_1$ and $f_2$ are $C^2$-tangent to the identity at $a$, we let $a_{i+1}$ be the supremum of such points $a$;
\item if not, $a_i$ cannot be accumulated from the right by points at which $f_1$ and $f_2$ are both $C^2$-tangent to 
the identity. We then let $a_{i+1}$ be the smallest such point if there is any, and $1$ otherwise. 
\end{itemize}
We stop once we have reached an $a_i$ equal to $1$. We claim that this happens. Indeed, otherwise, we would get an increasing sequence 
of points $a_i$ bounded from above by $1$. The limit of this sequence would be a fixed point $a$ at which both $f_1$ and $f_2$ are $C^2$ -tangent to the identity. For a large-enough $i$, we would have $\|f_1 - \mathrm{id}\|_2 \leq \eps$ and $\|f_2 - \mathrm{id}\|_2 \leq \eps$ on 
$[a_i,a]$. However, this is incompatible with the definition of $a_{i+1}<a$. 

By construction, the subdivision we obtain satisfies both properties of the lemma.
\end{proof}

\vspace{0.1cm} 

To pursue the proof of Theorem \ref{t:B}, we will show that the restriction of the action to an interval $I$ of type \ref{t:B} lies in the connected 
component of $(\mathrm{id}_I,\mathrm{id}_I)$. 
As a consequence, if we define a new pair $(\tilde f_1,\tilde f_2)$ by $\tilde f_i=\mathrm{id}$ on the intervals of type A and $\tilde f_i=f_i$ on the intervals of type B, we get a $C^2$ action on $[0,1]$ which lies in the connected component of the trivial action and is at distance less than or equal to $\eps$ of the initial action. Since this can be done for any $\eps>0$, we get that the initial action lies in \emph{the closure of} the connected component of the trivial action, but this component is closed, which concludes the proof of Theorem B.

Let us hence consider the restriction of the action to an interval $I$ of type B, that for simplicity we still denote by $(f_1,f_2)$. 

\vspace{0.1cm}

\begin{lem}
An interval $I$ of type B must be of one of the next two categories: 
\begin{itemize}
\item[(B1)] either $f_1$ and $f_2$ are iterates $f^p$ and $f^q$ of the same $C^2$ diffeomorphism $f$ of $I$, 
and this diffeomorphism is $C^2$-tangent to the identity at each endpoint of $I$ that lies in the interior of $[0,1]$;  

\noindent or

\item[ (B2)]  $f_1$ and $f_2$ correspond to rationally independent times, say $1$ 
and $\alpha$ respectively, of the flow $(f^t)_{t\in\R}$ of a $C^1$ vector field $X$ on $I$.
\end{itemize}
\end{lem}

\begin{proof}[Sketch of proof]
This is a direct consequence of classical works of Kopell \cite{kopell} and Szekeres \cite{szekeres} whenever $f_1$ and $f_2$ have no common 
fixed point in the interior of $I$. The general case was treated in \cite{EB} (cf. Proposition 2.7 therein) for $C^\infty$ diffeomorphisms, 
but the arguments are the same here. If $f_1$ or $f_2$ is the identity on $I$, then we are in situation $\textit{(B1)}$. Otherwise, one can 
derive from \cite{kopell,szekeres} that $f_1$ and $f_2$ have exactly the same fixed points. For every connected component $J$ of 
$I\setminus (\Fix(f)\cap\Fix(g))$, there exists $\alpha\in\R$ such that $f_2|_{J}$ is the time-$\alpha$ map of both Szekeres 
vector fields of $f_1 |_{J}$. Now it follows from the non-tangency to the identity at interior fixed points that this $\alpha$ 
does not depend on the component $J$. If it is rational, equal to $p/q$ then, letting $f$ be the time-$1/q$ map of the 
Szekeres vector fields of $f_1$ on each $J$ (which is a $C^2$ diffeomorphism of $I$ since it can be obtained as a 
composition of powers of $f_1$ and $f_2$), we conclude that we are in situation $\textit{(B1)}$. If it is irrational, then 
the left and right Szekeres vector fields of $f$ must coincide on each $J$, and they yield to a $C^1$ vector field of 
$I$ by the work of Yoccoz \cite[chap. V]{yoccoz}. Therefore, we are in case $\textit{(B2)}$. 
\end{proof}

\vspace{0.1cm}

Finally, to close the proof of Theorem \ref{t:B}, let us consider the case of intervals of type (B1) and (B2) separately.

For intervals of type $\textit{(B1)}$, we let $t \mapsto F_t$ be the continuous path of $C^2$ diffeomorphisms from $f$ to $\mathrm{id}$ obtained 
by taking convex combination of the logarithm of the derivatives (as in \S \ref{section-no-trivial}).
Then $t \mapsto (F_t^p,F_t^q)$ yields the desired path from $(f_1,f_2)$ to $(\mathrm{id}, \mathrm{id})$, 
thus showing that $(f_1,f_2)$ belongs to the path-connected component of $(\mathrm{id},\mathrm{id})$. 

On intervals of type $\textit{(B2)}$, Proposition \ref{p:C1toC2} provides a $C^2$ vector field $\tilde X$ whose pair of time-$1$ and time-$\alpha$ maps 
$(\tilde f^1,\tilde f^\alpha)$ belongs both to the closure of the conjugacy class of the restriction of the pair $(f_1,f_2)$ (and thus to its connected component) 
and to the path-connected component of the trivial action (to which it is connected via $t \mapsto (\tilde f^{t},\tilde f^{t\alpha})$). 

This finishes the proof of Theorem \ref{t:B}.


\section{Appendix I: vector fields for $C^{1+\mathrm{bv}}$ diffeomorphisms of the interval}

Here we proceed to give proofs of the results announced in \S \ref{section-mather}, and later proceed to some further developments. 


\subsection{The construction of the vector field}

Recall that $\Diff_+^{1+\mathrm{bv},\Delta} ([0,1])$ (resp. $\Diff_+^{1+\mathrm{ac},\Delta}([0,1])$) 
denotes the space of $C^{1+\mathrm{bv}}$ (resp. $C^{1+\mathrm{ac}}$) diffeomorphisms of $[0,1]$ with no fixed point in the interior. 
For simplicity, whenever it is defined, we will denote the {\em affine derivative} \, $D\log Df$ \, simply by \, 
$Lf$. (For a $C^{1+\mathrm{ac}}$ diffeomorphism, this is an $L^1$ function.)

\vspace{0.2cm}

\begin{prop} \label{p:improved-szek} 
Given $f\in \Diff^{1+\mathrm{bv}, \Delta}_+ ([0,1))$ such that $f(x)>x$ for every $x\in(0,1)$,  
let $\Delta(x) := f(x)-x $, and let
$$c_0 (f) := \begin{cases}
\frac{\log Df (0)}{Df (0) - 1} \quad \mathrm{ if } \quad Df (0) \neq 0,\\
1 \quad \quad \quad  \quad \mathrm{otherwise}.
 \end{cases}$$
For each $n \geq 0$, let $\X_n := c_0(f)(f^n)_* (\Delta) = c_0 (f) \tfrac{\Delta \circ f^{-n} }{ Df^{-n}}$. Then:
\begin{enumerate}

\item the sequence of vector fields $\X_n$ uniformly converges on every compact subset of~$[0,1)$;
\item its limit $\X$ is uniquely integrable and complete, and $f$ is the time-$1$ map of the corresponding flow;
\item the (well-defined) flow $(f^t)_{t \in \mathbb{R}}$ of $\X $ coincides with the $C^1$ centralizer of $f$;
\item for every $c > 0$, the function $\log \X_n$ converges to $\log \X$ in the BV topology 
on the fundamental interval $[c,f(c)]$, and
$$\left|\var(\log \X;[c,f(c)])- \log Df(0)\right|\le \var(\log Df ; [0,c]).$$
\end{enumerate}
Furthermore, if $f\in \Diff^{1+\mathrm{ac},\Delta}_+ ([0,1))$, then
\begin{enumerate}
\item[5.] 
the function $\log \X$ is absolutely continuous on every fundamental interval $[c,f(c)]$, and
$$\left\| D\log \X-\tfrac{\log Df(0)}\X \right\|_{L^1([c,f(c)])}\le\| L f \|_{L^1([0,c])}.$$
\end{enumerate}
 \end{prop}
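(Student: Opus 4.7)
The plan is to adapt Szekeres' classical construction, replacing $C^2$ estimates by variation estimates on $\psi := \log Df$. The heart of the matter is a telescoping identity: applying the mean value theorem to $f$ on the interval $[f^{-n}(x), f^{-n+1}(x)]$ (and using $\Delta(f(y))/\Delta(y) = Df(\xi)$ for some $\xi$ in that interval), one unpacks the definition of $\X_n$ to obtain
$$\log \X_n(x) - \log \X_{n-1}(x) \,=\, \psi(f^{-n}(x)) - \psi(\xi_n(x)),$$
where $\xi_n(x) \in [f^{-n}(x), f^{-n+1}(x)]$. Pointwise this is bounded by $\mathrm{osc}(\psi; [f^{-n}(x), f^{-n+1}(x)])$, which on $[c, f(c)]$ does not exceed $\var(\psi; [f^{-n}(c), f^{-n+1}(c)])$. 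Since the fundamental intervals $[f^{-n}(c), f^{-n+1}(c)]$ are pairwise disjoint and together exhaust $(0,c]$, summing in $n$ yields a total bound by $\var(\psi;[0,c])$. This proves the uniform convergence (1) on $[c, f(c)]$, extended to any compact in $[0,1)$ by iteration.

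For (4), I would upgrade the pointwise bound to a BV-norm bound: composition by the monotonic $f^{-n}$ preserves variation, so the variation of $\log \X_n - \log \X_{n-1}$ on $[c,f(c)]$ is controlled by the variation of $\psi - \log A$ on $[f^{-n}(c), f^{-n+1}(c)]$, where $A(y) := \Delta(f(y))/\Delta(y)$ is the average of $Df$ on $[y,f(y)]$ (and $\log A$ has variation bounded by $\var(\psi)$ on the same interval, since $A$ is squeezed between $\inf Df$ and $\sup Df$ there). Summing yields BV convergence, and the precise estimate comes by isolating the dominant term: $\log \X_0 = \log c_0 + \log \Delta$ has variation $\sim \log(f(c)/c)$ on $[c, f(c)]$, which limits to $\log Df(0)$ modulo $\var(\psi;[0,c])$, and the successive corrections are absorbed in the same quantity. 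Items (2) and (3) then follow from the classical argument: once $\X > 0$ is continuous with $\log \X \in BV_{loc}((0,1))$, the map $P(x) := \int_a^x du/\X(u)$ is a $C^{1+\mathrm{bv}}$ conjugacy from $(0,1)$ onto $\R$ sending $\X$ to the constant field $\partial_t$; the relation $f_* \X = \X$, obtained in the limit from $f_* \X_n = \X_{n+1}$, becomes $P \circ f = P + 1$, making $f$ the complete time-$1$ map of the flow, and any $C^1$ centralizer element, conjugated by $P$ into a translation commuting with $x \mapsto x+1$, must be a time-$t$ map.

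For the $C^{1+\mathrm{ac}}$ statement (5), the same identity, differentiated distributionally and transported by change of variables, gives an $L^1$-estimate for $D\log \X_n - D\log \X_{n-1}$ on $[c, f(c)]$ controlled by $\|Lf\|_{L^1([f^{-n}(c), f^{-n+1}(c)])}$ (without any multiplicity cost, since the fundamental intervals are disjoint). Summing yields AC convergence of $\log \X_n$, and after identifying the formal leading term $\psi \circ f^{-n}/\X \to \log Df(0)/\X$ (reflecting that at $0$ the vector field $\X$ is tangent to $x \mapsto \log Df(0)\,x$), one obtains the announced sharp bound. The hardest step throughout is that the intermediate point $\xi_n$ (equivalently, the average $A$) is not a smooth function of $x$, so pointwise differentiation fails; I would circumvent this by working with the integral representation $A(y) = \int_y^{f(y)} Df(u)\,du/(f(y)-y)$ and using that its BV and $L^1$ variations on a fundamental interval are controlled by the corresponding variation of $\psi$, rather than trying to differentiate $\xi_n$ directly.
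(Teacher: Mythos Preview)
Your telescoping identity for item 1 is exactly the paper's: what you call $\psi-\log A$ evaluated at $f^{-n}(x)$ is the paper's $\theta$ evaluated at $f^{-n+1}(x)$, and your pointwise bound by $\var(\psi;[0,c])$ over disjoint fundamental intervals is the same argument. So item 1 is fine and essentially identical to the paper.

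For item 4, however, your argument has a genuine gap. Saying that $A(y)=\Delta(f(y))/\Delta(y)$ is ``squeezed between $\inf Df$ and $\sup Df$'' on a fundamental interval bounds the \emph{oscillation} of $\log A$ there, not its \emph{variation}; and it is the variation that you must sum. In fact $\var(\log\Delta;J_n)=\int_{J_n}|\Delta'/\Delta|$ is of order $\log Df(0)$ for every $n$ in the hyperbolic case, so the individual variations of the two pieces of $\log A$ do not even form a summable series (there is cancellation, but you have not exhibited it). The integral representation you mention at the end can indeed be made to work, via the bound $\var\big(\int_0^1 Df(\cdot+s\Delta(\cdot))\,ds;J_n\big)\le\var(Df;[f^{-n}(c),f^{-n+2}(c)])$ (each map $y\mapsto y+s\Delta(y)$ being monotone), but this should be the primary argument, not an afterthought. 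The paper sidesteps the whole issue by a cleaner decomposition: it writes $\log\X_k=\log c_0+\log(\Delta\circ f^{-k})-\log Df^{-k}$ and treats the two nonconstant pieces separately. The second piece has summable variation by the obvious telescoping. The first piece is actually $C^1$ on $(0,1)$ (since $\Delta=f-\mathrm{id}$ is $C^1$), its derivative equals $c_0(f)\,\frac{D\Delta\circ f^{-k}}{\X_k}$, and this converges \emph{uniformly} on $[c,f(c)]$ to $\frac{Df(0)-1}{\X}$; hence its $L^1$-differences go to zero, and its variation converges exactly to $\log Df(0)$, which is precisely the main term in the estimate.

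For items 2 and 3, two points are glossed over. First, the inclusion $\{f^t\}\subset C^1$-centralizer requires showing that each $f^t$ extends to a $C^1$ diffeomorphism of $[0,1)$, i.e.\ that $Df^t(x)=\X(f^t(x))/\X(x)$ has a limit (namely $Df(0)^t$) as $x\to0$. This is a nontrivial computation, handled separately in the parabolic and hyperbolic cases in the paper, and is not covered by ``the classical argument''. Second, your argument for the reverse inclusion---that a $C^1$ centralizer element $g$ becomes a translation after conjugation by $P$---is unjustified: $PgP^{-1}$ commutes with $T_1$, but that only makes it the lift of a circle diffeomorphism, not a translation. What is needed is $g_*\X=\X$, which does not follow formally from $gf=fg$; the paper simply invokes Kopell's lemma once $f^t\in C^1$ is established.
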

 
 \begin{proof}[Proof.] We proceed step by step.

\medskip

\noindent{\em 1.} For every $k\in\N$, we have $ \log \frac{ \X_{k+1} }{ \X_k }  = \theta \circ f^{-k} $ with, for every $x\in(0,1)$,
  $$\theta(x)
  := \log \left( \frac{ x - f^{-1}(x) }{ Df^{-1}(x) \, (f(x) - x) }\right) = \log \left(\int_0^1 Df^{-1}\bigl( x + s (f(x)-x) \bigr)\,ds \right) 
- \log(Df^{-1}(x))$$
(the last equality follows from Taylor's integral formula). In particular, $\theta$ extends to $[0,1)$ as a continuous map, with $\theta(0)=0$. 
By the mean value theorem, for every $x\in[0,1)$,  
$$ \int_0^1 Df^{-1} \bigl( x + s (f(x)-x) \bigr) \, ds = Df^{-1}(y_x) \quad \text{for some} \quad y_x \in [x,f(x)]. $$
Therefore, given $c\in[0,1)$, for every $x\in [0,c]$ and every $0\le i\le j$, we have
\begin{align*} \label{e:somme}
\sum_{k=i}^{j-1} \left| \theta \circ f^{-k}(x) \right| 
&=  \sum_{k=i}^{j-1} |\log Df^{-1}(y_{f^{-k}(x)})-\log Df^{-1}(f^{-k}(x))|\\
&\le \var(\log Df^{-1} ; [0,f^{-i+1}(c)])=\var(\log Df ; [0,f^{-i}(c)])\xrightarrow[i\to+\infty]{}0.
\end{align*}
As a consequence, $\sum_k \theta\circ f^{-k}$ converges absolutely and uniformly on 
$[0,c]$. Denote by $\Sigma$ this sum (which is thus continuous on $[0,1)$). Then $(\X_n)$ 
converges uniformly on every compact subset of $[0,1)$ towards $\X:= c_0(f)\Delta \, e^\Sigma$. 
In particular, $\X$ vanishes only at $0$, and is strictly positive at all points of $(0,1)$.

\medskip

 \noindent{\em 2.} Since by definition $\X_{n+1}=f_*\X_n$, in the limit we obtain $\X=f_*\X$, or equivalently $\X=f^*\X$, 
 that is, $\X=\frac{\X\circ f}{Df}$. This implies that the derivative of $x\mapsto \int_x^{f(x)}\frac{du}{\X(u)}$ is identically 
 $0$ on $(0,1)$, so this map is constant, say equal to some $\tau\in\R^*$. The proof of the equality $\tau=1$ in the proof 
 of the ``usual'' Szekeres theorem ({\em i.e.} when $f$ is assumed $C^2$) reproduced in
 \cite[Proposition 4.1.14]{libro} works without any change in the present setting. 
 
Let us now show that $\X\res{(0,1)}$ is complete and uniquely integrable. Fix $a\in (0,1)$, and let 
\begin{equation}\label{eq:P}
P_{\X} = P \!: x\in(0,1)\mapsto \int_a^x\frac{du}{\X(u)}.
\end{equation}
The map $P$ is of class $C^1$, with positive derivative, and $P(f^n(a))=n$ 
for every $n\in \mathbb{Z}$. Therefore, $P$ defines a $C^1$ diffeomorphism between $(0,1)$ and $\R$. 
Its inverse $\psi := P^{-1}$ hence sends $\R$ into $(0,1)$.
 
 We claim that, for every $(t_0,x_0)\in \R\times (0,1)$, the Cauchy problem 
 $$\begin{cases}
 \dot x = \X(x)\\
x(t_0)=x_0
 \end{cases}$$
 has a unique maximal solution, defined on all of $\R$ by $\gamma(t) := P^{-1}(t-t_0+P(x_0))$. 
 (In particular, the only solution of $\dot x = \X(x)$ passing through $0$ is constant equal to $0$.)
Indeed, given $(t_0,x_0)\in \R\times (0,1)$, one immediately checks that the map $\gamma$ above is a (maximal) solution to the 
above Cauchy problem. To prove uniqueness, first notice that one cannot invoke the Cauchy-Lipschitz theorem, since $\X$ is not 
necessarily locally Lipschitz. However, the one-dimensional setting provides a more elementary argument, as shown below. 
 
 Assume $\tgamma:I\to[0,1)$ is a maximal solution of the same Cauchy problem, and let $I_*\subset I$ be the maximal interval containing $t_0$ on which 
 $\tgamma$ does not vanish. 
 Then for every $t\in I_*$, we have $\dot{\tgamma}(t)=\X(\tgamma(t))$, with $\X(\tgamma(t))\neq0$, hence $\frac{\dot{\tgamma}(t)}{\X(\tgamma(t))}=1$.
 By integration and change of variables, we obtain 
 $$\int_{x_0}^{\tgamma(t)}\frac{du}{\X(u)}=t-t_0,$$ 
 that is, \, $P(\tgamma(t))-P(x_0)=t-t_0,$ \, 
 and so $\tgamma(t)=\gamma(t)$. It hence remains to justify that $I_*=\R$. Assume by contradiction that one of the extrema of $I_*$, 
 say its infimum to fix ideas, is finite. Then the restriction of $\tgamma$ to $I_*$ is not maximal, since it can be extended until $-\infty$ 
 by $\gamma$. Therefore, the infimum $\alpha$ of $I_*$ is not that of $I$. This implies that $\lim_{t \to \alpha} \tgamma (t) = 
 \lim_{t \to \alpha} \gamma (t) = 0$. However, this is in contradiction to $\lim_{t \to \alpha} \gamma (t) = 
 \gamma(\alpha)\in(0,1)$. A similar argument shows that $\sup I_*=+\infty$, thus closing the proof of the uniqueness.  

Summarizing, the flow $(t,x)\mapsto \phi(t,x)$ of $\X$ is well-defined on $\R\times[0,1)$ and given 
by $\phi(t,x)=P^{-1}(t+P(x))$ for $x\neq0$ and $\phi(t,0)=0$ for every $t$. In particular, the equality 
$$\int_x^{f(x)}\frac{du}{\X(u)}=1,$$ 
which is equivalent to $P(f(x))-P(x)=1$, means that $f$ is the time-$1$ map of $\X$. \medskip 

\medskip

\noindent{\em3.} For every $t\in\R$, the time-$t$ map $f^t = \phi(t,\cdot )$ of the flow of $\X$ commutes with $f$. 
Therefore, in order to derive {\em 3.} from Kopell's Lemma \cite{kopell}, 
it is enough to prove that $f^t$ is a $C^1$ diffeomorphism of $[0,1)$ for each $t$. 
The formula for $\phi(t,x)$ given in the proof of {\em 2.} shows that $f^t$ is $C^1$ and $Df^t = \frac{\X\circ f^t}{\X}$ on $(0,1)$. 
We are thus reduced to proving that $Df^t(x)$, or rather \, $\log Df^t(x)$, \, has a limit when $x$ goes to $0$, namely 
\, $t\log Df(0)$. \, To do this, it is enough to restrict to \, $t\in[0,1]$. \, For every $x>0$,
$$\log Df^t(x) = \log \left( \frac{\X(f^t(x))}{\X(x)} \right) = \log \left( \frac{\Delta(f^t(x))}{\Delta(x)} \right) + \Sigma(f^t(x)) - \Sigma(x).$$
Since $\Sigma$ is continuous at $0$ and vanishes at this point, what we need to prove is that 
$$\lim_{x\to0} \log \left( \frac{\Delta(f^t(x))}{\Delta(x)} \right) = t\log Df(0).$$
If $Df(0)=1$ then, for some $u \in [x, f^t (x)]$,
\begin{align*}\left|\frac{\Delta(f^t(x))}{\Delta(x)}-1\right|&=\frac{|\Delta'(u)|\times|f^t(x)-x|}
{\Delta(x)}
\le \max_{y \in [x,f^t(x)]}|\Delta' (y)|\xrightarrow[x\to0]{}|\Delta'(0)|=0.
\end{align*}
If $\lambda:=Df(0)>1$, then $\Delta'(0)=\lambda-1>0$, so 
$\frac{\Delta(y)}{y}\xrightarrow[y\to0]{}\lambda-1\neq0$. This implies $\frac{\Delta(f^t(x))}{\Delta(x)}\sim_{x\to0} \frac{f^t(x)}{x}$.  
Thus, we need to prove that $\log ( \frac{f^t(x)}{x} ) \xrightarrow[x\to0]{}t\log \lambda$. To do this, first observe that 
$$\X(x)=\frac{\log\lambda}{\lambda-1} \, \Delta(x) \, e^{\Sigma(x)}\underset{x\to0}{\sim}(\log\lambda) \, x.$$ 
Thus, given $\eps>0$, we may let $\delta>0$ be such that 
$$\frac{1-\eps}{(\log\lambda)u}<\frac1{\X(u)}<\frac{1+\eps}{(\log\lambda)u}$$
for every $u\in(0,\delta]$. Assuming that $f^t(x)$ (and thus $x$) is in this interval, we obtain
$$\int_x^{f^t(x)}\frac{1-\eps}{(\log\lambda)u}du
\le \int_x^{f^t(x)}\frac{du}{\X(u)}\le  \int_x^{f^t(x)}\frac{1+\eps}{(\log\lambda)u}du,$$ 
hence
$$\frac{1-\eps}{\log\lambda}\log \left( \frac{f^t(x)}{x} \right) \le t \le \frac{1+\eps}{\log\lambda}\log \left(\frac{f^t(x)}{x} \right),$$
and thus
$$\frac{t\log\lambda}{1+\eps}\le \log \left( \frac{f^t(x)}{x} \right) \le \frac{t\log\lambda}{1-\eps}.$$
Letting $\varepsilon \to 0$, this gives the desired limit for \, $\log (\frac{f^t(x)}{x})$.

\medskip

\noindent{\em 4.} Since $\X$ is bounded away from zero on $[c,f(c)]$, by item {\em 1.} we have that $\log \X_k$ 
converges uniformly towards $\log \X$ on this segment. Now for every $k\ge j\in\N$,
\begin{align}\label{eq:est-n}
\var \big( \log \X_k - \log \X_j ; [c,f(c)] \big) \le \var \big(&\log (\Delta\circ f^{-k})-\log (\Delta\circ f^{-j}) ; [c,f(c)] \big)\\&+ 
\var \big( \log Df^{-k}-\log Df^{-j} ; [c,f(c)] \big).\notag
\end{align}
Concerning the last term, 
\begin{eqnarray*}
\var(\log Df^{-k}-\log Df^{-j} ; [c,f(c)]) 
&=& \var \Big( \sum_{i=j}^{k-1}\log (Df^{-1}\circ f^{-i}); [c,f(c)] \Big)\\
&\le& \sum_{i=j}^{k-1} \var( \log (Df^{-1}\circ f^{-i}); [c,f(c)])\\
&=& \sum_{i=j}^{k-1} \var( \log Df^{-1}; [f^{-i}(c),f^{-i+1}(c)])\\
&\le& \var( \log Df^{-1}; [f^{-k+1}(c),f^{-j+1}(c)])\\
&=& \var( \log Df; [f^{-k}(c),f^{-j}(c)])\xrightarrow[j\to+\infty]{}0.
\end{eqnarray*}
Concerning the previous term in (\ref{eq:est-n}), 
\begin{align*}
\var(\log (\Delta\circ f^{-k})-\log (\Delta\circ f^{-j}) ; [c,f(c)])
&=\| D\log (\Delta\circ f^{-k})-D\log (\Delta\circ f^{-j})\|_{L^1([c,f(c)])}\\
&=c_0(f) \left\| \tfrac{D\Delta\circ f^{-k}}{\X_k}-\tfrac{D\Delta\circ f^{-j}}{\X_j}\right\|_{L^1([c,f(c)])}\xrightarrow[j\to+\infty]{}0,
\end{align*}
since $\tfrac{D\Delta\circ f^{-k}}{\X_k}$ converges uniformly towards $\tfrac{Df(0)-1}{\X}$ on $[c,f(c)]$.
By completeness of $BV([c,f(c)])$, we get that \, $\log \X$ \, belongs to this space. Observe furthermore that 
\begin{align*}
\var \big( \log (\Delta\circ f^{-k}) ; [c,f(c)] \big) 
= c_0(f) \int_c^{f(c)}\left|\tfrac{D\Delta\circ f^{-k}}{\X_k}\right| \xrightarrow[k\to+\infty]{}c_0(f)\int_c^{f(c)}\frac{D\Delta(0)}{\X}=\log Df(0),
\end{align*}
and that (because of the previous estimate with $j=0$)
\begin{align*}
\var(\log Df^{-k}; [c,f(c)])\le  \var( \log Df; [f^{-k}(c),c]).
\end{align*}
Therefore, letting $k$ go to infinity in 
$$\var(\log \X_k;[c,f(c)])\le \var \log (\Delta\circ f^{-k}) ; [c,f(c)])+ 
\var(\log Df^{-k}; [c,f(c)])$$
and 
$$\var (\log (\Delta\circ f^{-k}) ; [c,f(c)])\le\var(\log \X_k;[c,f(c)])+ 
\var(\log Df^{-k}; [c,f(c)])$$
(which both follow from the definition of $\X_k$),
we get
\begin{equation*}\label{est-one-direction}
\var(\log \X;[c,f(c)])\le \log Df(0)+\var(\log Df ; [0,c])
\end{equation*}
and
\begin{equation*}\label{est-second-direction}
 \log Df(0)\le \var(\log \X;[c,f(c)])+\var(\log Df ; [0,c]),
\end{equation*}
which yield the desired estimate.
\medskip

\noindent{\em 5.} We now assume $f\in\Diff^{1+\mathrm{ac},\Delta}_+([0,1))$. Then, by the definition, $\log \X_k$ is locally absolutely continuous on $(0,1)$. Since, for 
every $c > 0$, the subspace $AC([c,f(c)])$ is closed in $BV([c,f(c)])$, the function $\log \X$ is also absolutely continuous on $[c,f(c)]$, and so is $\X$. 
One can then improve the estimate of item {\em 4.} as follows: Given $k \geq 0$, the functions $\X_k$ and $Df^k$ are almost everywhere differentiable, 
and the following equalities hold almost everywhere on $[0,1)$:
\begin{small}
$$ D\log \X_k  
= D \left(\log(\Delta \!\circ \!f^{-k}) \! - \! \log(Df^{-k})
\right)
= \tfrac{D \Delta (f^{-k})}{\Delta (f^{-k})} \cdot Df^{-k}(x) - D \log Df^{-k}
= c_0(f)\tfrac{D\Delta\circ f^{-k}}{\X_k} - Lf^{-k}.$$
\end{small}Therefore, 
\begin{eqnarray*}
\left\|D\log \X_k-c_0(f)\tfrac{D\Delta\circ f^{-k}}{\X_k}\right\|_{L^1([c,f(c)])} 
&=& \|Lf^{-k}\|_{L^1([c,f(c)])} \\
&=& \left\| \sum_{i=0}^{k-1} L (f^{-1}) \circ f^{-i} \cdot Df^{-i} \right\|_{L^1 ([c,f(c)])} \\
&\leq& \sum_{i=0}^{k-1} \left\| L (f^{-1}) \circ f^{-i} \cdot Df^{-i} \right\|_{L^1 ([c,f(c)])} \\
&=& \sum_{i=0}^{k-1} \| L(f^{-1}) \|_{L^{1}([f^{-i}(c),f^{-i+1}(c)])}\\
&=& \sum_{i=0}^{k-1} \| L(f) \|_{[f^{-i-1}(c), f^{-i}(c)]} 
\,\,\, \leq \,\,\,  \|Lf\|_{[0,c]},
\end{eqnarray*}
and taking the limit when $k$ goes to infinity gives the desired estimate.\end{proof}


\subsection{Mather invariant and the fundamental inequality revisited} 

Our goal here is to prove Theorem \ref{thm-general}. Namely, for every $f \in \Diff^{1+\mathrm{bv},\Delta}_+([0,1])$, 
\begin{equation}\label{fund-ineq}
\big| \mathrm{var} (\log DM_f) - \dist (f) \big| \leq |\log Df(0)| + |\log Df(1)|.
\end{equation}
As mentioned in \S \ref{section-mather}, this corresponds to an extension of 
\cite[Theorem B]{EN} to the $C^{1+\mathrm{bv}}$ setting.

Assume that $f (x) > x$ for $x \in (0,1)$ to fix ideas (otherwise, just use (\ref{eq-inv}) and the fact that, by definition, 
the Mather invariant of $f^{-1}$ equals that of $f$ up to a reflexion.)  Since $f$ is the time-1 map of the flow of both 
$\X$ and $\Y$, the maps $\psi_{\X} = P_{\X}^{-1}$ and $\psi_{\Y} = P_{\Y}^{-1}$ satisfy $\psi \circ T = f \circ \psi$ for $T := T_1$, 
the translation by $1$. Therefore, for each positive $m,n$ we have, letting $k:= m+n$: 
\begin{equation}\label{ren}
M_f = T_{-m} \circ ( \psi_{\Y} )^{-1} \circ f^k \circ \psi_{\X} \circ T_{-n}.
\end{equation}
This yields,
\begin{eqnarray*}
DM_f (t) 
&=& \frac{D \psi_{\X} (t-n)}{D \psi_{\Y} \big( (\psi_{\Y})^{-1} f^k \, \psi_{\X} (t-n) \big) } \cdot Df^k (\psi_{\X} (t-n)) \\
&=& \frac{\X (\psi_{\X} (t-n)) }{\Y (f^k\psi_\X(t-n))} \cdot Df^k (\psi_{\X} (t-n)).
\end{eqnarray*}
This easily implies that 
$$\left| \var ( \log DM_f  ) - \var (\log Df^k; [f^{-n}(a), f^{-n+1}(a)])\right|$$
is bounded from above by 
$$\var \big( \log \X ; [f^{-n}(a), f^{-n+1}(a)] \big) + \var \big( \log \Y ; [f^m(a),f^{m+1}(a)] \big).$$
By item {\em 4.} of Proposition \ref{p:improved-szek}, the latter expression is smaller than or equal to
$$\big| \log Df(0) \big| + \big| \log Df (1) \big| + \mathrm{var} (\log Df; [f^{-n}(a), f^{-n+1}(a)]) + \mathrm{var} (\log Df ; [f^m (a), f^{m+1}(a)]).$$
Letting $m=n = N \to \infty$, the last two terms above converge to $0$, and Proposition 5.1 of \cite{EN} yields 
\begin{equation}\label{ren-dist}
\var (\log Df^k; [f^{-n}(a), f^{-n+1}(a)]) = \var (\log Df^{2N} ; [f^{-N}(a), f^{-N+1}(a)] )\to \dist(f).
\end{equation}
Putting everything together, we finally obtain 
$$\big| \var ( \log  DM_f ) - \dist (f) \big| \leq \big| \log Df(0) \big| + \big| \log Df (1) \big|.$$

\begin{rem} The proof above has the disadvantage of using a result from \cite{EN}. Notice also that the first proof of \eqref{fund-ineq} given in \cite{EN} 
for $C^2$ diffeomorphisms applies with some minor adjustments to $C^{1+\mathrm{ac}}$ diffeomorphisms, but fails in $C^{1+\mathrm{bv}}$ regularity. 
Below we propose a direct argument at least for half of the inequality. 

From $M_f = ( \psi_{\Y} )^{-1} \circ \psi_{\X}$ we obtain 
$$\log DM_f = (\log \X - \log \Y)\circ \psi_{\X}.$$ 
It readily follows from item {\em 4.} in Proposition \ref{p:improved-szek} (and its analog for $\Y$) that $M_f$ 
belongs to $\Diff^{1+\bullet}_+ (\R/\Z)$ 
for $f\in \Diff^{1+\bullet,\Delta}_+ ([0,1])$, where $\bullet$ stands for either $\mathrm{bv}$ or $\mathrm{ac}$; moreover, 
\begin{align*}
\mathrm{var} (\log DM_f;[0,1]) &\leq \var(\log \X;\psi_{\X}([0,1])+\var(\log \Y;\psi_{\X}([0,1])\\
&=\var(\log \X ;[a,f(a)])+\var(\log \Y ;[a,f(a)])\\
&\le |\log Df(0)|+\var (\log Df ; [0,a])+ |\log Df(1)|+\var (\log Df ; [a,1]).
\end{align*}
Therefore, 
$$\mathrm{var} (\log DM_f)\le  |\log Df(0)|+ |\log Df(1)|+\var (\log Df).$$
Now, since $M_f$ is invariant under conjugacy, this implies
$$\mathrm{var} (\log DM_f) \leq  |\log Df(0)| + |\log  Df(1)| + \inf_{h} \mathrm{var} (\log D (hfh^{-1})),$$
where the infimum runs over all $h \in \mathrm{Diff}^{1+\mathrm{bv}}_+([0,1])$. By (\ref{eq-conj}), this infimum 
is nothing but the asymptotic variation of $f$. Thus, the previous inequality becomes
$$\mathrm{var} (\log DM_f) \leq  |\log Df (0)| + |\log Df (1)| + \dist (f),$$ 
which is one of the inequalities involved in (\ref{fund-ineq}).
\end{rem}


\subsection{The case of piecewise smooth homeomorphisms}

Equality (\ref{ren}) allows thinking of the Mather 
invariant as a renormalization of the action of high powers of $f$. For concreteness, assume again that $f(x) > x$ for 
all $x \in (0,1)$, and suppose that both $m,n$ are positive. Equality (\ref{ren}) then says that, in order to compute 
$M_f$ on $[0,1]$ (which is identified to $[a,f(a)]$ via $\psi_{\X}$) we may proceed by going to the translated point 
$t-n$, look for the image under $k$ iterates of $f$ of $\psi_{\X} (t-n)$, coming back to the real line by $\psi_{\Y}^{-1}$, and 
finally translating by $-m$. This is nothing but looking at the action of $f^k$ from the interval $[f^{-n} (a), f^{-n+1} (a)]$ 
into $[f^m(a),f^{m+1}(a)]$, both identified to the unit segment, the former via $\psi_{\X}^{-1}$ and the latter via $\psi_{\Y}^{-1}$.

There are two applications of this view. The first is that, if we know {\em a priori} the vector fields $\X$ and $\Y$ in neighborhoods 
of the corresponding endpoints, then (\ref{ren}) explicitly gives the Mather invariant. This is particularly useful in the case where 
$f$ is of class $C^2$ and the endpoints are hyperbolic fixed points of $f$. Indeed, in this situation, the Sternberg-Yoccoz 
linearization theorem establishes that the germs of $f$ at these points are $C^2$ linearizable. Therefore, up to a $C^2$ change 
of coordinates, we may assume that $\X (y) = \lambda \, y$ (resp. $\Y (z) = \mu \, (1-z)$) in a neighborhood of $0$ (resp. $1$), where 
$\lambda := \log (Df (0)) > 0$ (resp. $\mu := \log (Df (1)) < 0$). Taking $m,n$ large enough so that the intervals $[f^{-n}(a), f^{-n+1}(a)]$ 
and $[f^m(a), f^{m+1}(a)]$ lie inside the interior of  these domains of linearization, this yields a particularly simple expression for (\ref{ren}).
 
Another application of this view of the Mather invariant is the extension of its definition to homeomorphisms that are $C^{1+\mathrm{bv}}$ except 
for finitely many points in the interior and have nonvanishing left and right derivatives ({\em piecewise $C^{1+\mathrm{bv}}$ diffeomorphisms}, for short). 
Of course, one way to proceed in this case is to allow the vector fields $\X$ and $\Y$ to have discontinuities. Indeed, both $\X$ and $\Y$ are 
well defined in neighborhoods of the corresponding endpoints (because vector fields exist for germs of diffeomorphisms, as easily follows 
from Proposition \ref{p:improved-szek}), and starting from there they can be extended to the whole interval in a unique way by using the equivariance relations
$$\X (f(x)) = \X (x) \cdot Df(x), \qquad \Y (f(x)) = \Y (x) \cdot Df(x).$$
However, equality (\ref{ren}) is in many cases easier to handle. In particular, it leads to the fundamental inequality (\ref{fund-ineq}) in this broader context, 
the proof of which follows along the same lines of the one given above. Notice that both the variation of the logarithm of the derivative and the 
asymptotic variation are well defined for piecewise $C^{1+\mathrm{bv}}$ diffeomorphisms (at break points, we keep the value of the right derivative).

A particularly relevant example of the previous discussion 
is the space $\mathrm{PL}_+^{\Delta}([0,1])$ of piecewise-affine homeomorphisms of the interval with no fixed point in the 
interior. In this context, the variation of the Mather invariant described above has been considered by many authors.  A nice 
review of all of this may be found in \cite{matucci}. In particular, one can find therein a proof of the fact that $M_f$, together 
with the multipliers $Df (0)$ and $Df(1)$, are a complete invariant of $PL_+$ conjugacy in $\mathrm{PL}_+^{\Delta}([0,1])$. 
(These are analogous results to those of Mather that hold for $C^2$ diffeomorphisms.)

There are several other special features of piecewise-affine homeomorphisms in this context. One is that the conjugating maps that realize the 
asymptotic variation as the infimum of the total variation of the logarithm of the derivative along the conjugacy class may be also taken to be 
piecewise-affine. This immediately follows from the  explicit formula (\ref{eq:def-g_n}) that defines them. In this regard, it would be interesting 
to further study the case of piecewise-projective homeomorphisms: can the conjugating maps be taken also being piecewise-projective ?

Another special feature concerns equality (\ref{ren-dist}), namely
$$\lim_{N \to \infty} \var \big( \log Df^{2N} ; [f^{-N}(a), f^{-N+1}(a)] \big) = \dist (f).$$
Indeed, for a large-enough $N$, the left-hand side expression above obviously stabilizes. More generally, 
let $f$ be piecewise $C^{1+\mathrm{bv}}$ so that it is affine on neighborhoods of  both $[0,\varepsilon]$ and $[1-\varepsilon,1]$ 
for a certain $\varepsilon > 0$. 
Let $a$ be a point in the interior of one of these intervals such that $f(x)$ also lies therein, and let  $k$ be a positive 
integer such that either $f^k (a) > 1-\varepsilon$ or $f^{k} (a) < \varepsilon$, according to whether $f$ moves interior points 
to the right or to the left. If we denote by $I$ the interval with endpoints $a,f(a)$, then 
\begin{equation}\label{eq:K}
\dist (f) = \var (\log Df^k; I).
\end{equation}

A third special feature concerns the fundamental inequality (\ref{fund-ineq}), which in this case becomes an exact equality. More 
precisely, remind that for $C^2$ (and, more generally, for $C^{1+\mathrm{ac}}$) diffeomorphisms, one always has the strict inequality
$$\var(\log DM_f ) < \big| \log Df(0) \big| + \big| \log Df (1) \big| + \dist (f)$$
whenever the endpoints are hyperbolic fixed points; see \cite[Proposition 4.4]{EN}.  
However, for piecewise-affine homeomorphisms, the left and right-hand-side expressions above become equal.

\medskip

\begin{prop}  For every $f \in \mathrm{PL}_+([0,1])$ one has
\begin{equation}\label{fund-eq}
\mathrm{var} (\log DM_f ) = \big| \log Df(0) \big| + \big| \log Df (1) \big| + \dist (f).
\end{equation}
\end{prop}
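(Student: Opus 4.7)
My plan is to establish \eqref{fund-eq} by handling the two inequalities separately. The upper bound $\var(\log DM_f) \leq |\log Df(0)| + |\log Df(1)| + \dist(f)$ will follow immediately from the fundamental inequality \eqref{fund-ineq}, which, as just noted, extends to piecewise-$C^{1+\mathrm{bv}}$ homeomorphisms by the same renormalization argument. The substance of the proof therefore lies in the reverse inequality, where the piecewise-affine structure is essential.

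For that I will compute $\var(\log DM_f)$ directly via the renormalization identity \eqref{ren}. Assume without loss of generality that $f(x) > x$ on $(0,1)$, using \eqref{eq-inv} and the reflection-symmetry of the Mather invariant. Since $f$ is piecewise affine with no interior fixed point, $f$ is affine on neighborhoods $[0,\varepsilon]$ and $[1-\varepsilon,1]$ of the endpoints, and both endpoints are automatically hyperbolic with multipliers $\lambda_0 := Df(0) > 1$ and $\mu_1 := Df(1) < 1$. In these neighborhoods the Szekeres vector fields are linear: $\X(x) = (\log \lambda_0)\,x$ and $\Y(z) = |\log\mu_1|\,(1-z)$. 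I fix a basepoint $a \in (0, \varepsilon)$ with $f(a) < \varepsilon$ and integers $n, m$ large enough that $[f^{-n}(a), f^{-n+1}(a)] \subset [0,\varepsilon]$ and $[f^m(a), f^{m+1}(a)] \subset [1-\varepsilon, 1]$; setting $k := n+m$, identity \eqref{eq:K} gives $\var(\log Df^k; [f^{-n}(a), f^{-n+1}(a)]) = \dist(f)$.

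Setting $x := \psi_{\X}(t-n)$, so that $x$ traverses $[f^{-n}(a), f^{-n+1}(a)]$ monotonically as $t \in [0,1]$, formula \eqref{ren} gives
$$\log DM_f(t) \;=\; \log\X(x) \;-\; \log\Y(f^k(x)) \;+\; \log Df^k(x) \;=:\; g(x) + h(x),$$
where $g := \log\X - \log\Y\circ f^k$ and $h := \log Df^k$. From the linearity of $\X$ near $0$, $\log\X$ is continuous strictly increasing on the source interval with variation $\log\lambda_0 = |\log Df(0)|$. Similarly $\log\Y$ is continuous strictly decreasing on the target interval $[f^m(a), f^{m+1}(a)]$ with variation $|\log Df(1)|$; since $f^k$ is a monotone homeomorphism from source onto target, $\log\Y \circ f^k$ is continuous strictly decreasing on the source with the same variation. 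Therefore $g$ is continuous and strictly monotone with $\var(g) = |\log Df(0)| + |\log Df(1)|$, while $h$ is piecewise constant on the source interval (because $f^k$ is piecewise affine) with $\var(h) = \dist(f)$.

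The proof then concludes via the elementary additivity $\var(g + h) = \var(g) + \var(h)$ whenever $g$ is continuous monotone and $h$ is piecewise constant: between consecutive jumps of $h$ the sum inherits the full monotone increase of $g$ on that subinterval, and at each jump it picks up exactly the signed jump of $h$, contributing its absolute value; no cancellation occurs. The main obstacle I anticipate is precisely this no-cancellation phenomenon, which is the structural fact distinguishing the piecewise-affine case from the $C^{1+\mathrm{ac}}$ case: in the smooth setting $\log Df^k$ is continuous, and its oscillations can partly cancel with those of the Szekeres fields, producing the strict inequality of \cite[Proposition 4.4]{EN}; the piecewise-constant nature of $\log Df^k$ in the PL setting is what rules out any such interaction and turns the fundamental inequality into an equality.
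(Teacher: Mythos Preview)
Your proposal is correct and follows essentially the same route as the paper. Both arguments use the renormalization identity \eqref{ren} on a fundamental interval contained in the affine zone near $0$, decompose $\log DM_f$ as the sum of the continuous strictly monotone piece $\log\X-\log\Y\circ f^k$ (contributing $|\log Df(0)|+|\log Df(1)|$) and the piecewise-constant piece $\log Df^k$ (contributing $\dist(f)$ via \eqref{eq:K}), and conclude by the no-cancellation observation that the total variation of a continuous monotone function plus a step function is the sum of the two variations; your separate appeal to \eqref{fund-ineq} for the upper bound is harmless but redundant, since the direct computation already yields the exact value.
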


\begin{proof} We assume that $f(x) > x$ for $x \in (0,1)$, and we use again (\ref{ren}). By differentiation, this becomes
$$DM_f (t) = \frac{\X (\psi_{\X} (t-n)) }{\Y (f^k\psi_\X(t-n)) \cdot Df^k (\psi_{\X} (t-n))}
$$ 
(at discontinuity points of the derivative, this equality holds for left and right derivatives). Let $p_0 < p_1 < \ldots < p_n$ 
be a finite family of points of $[a,f(a)]$ that includes all discontinuity points of $Df^k$ therein, as well as $a$ and $f(a)$. 
Slightly changing $a$ if necessary, we may ensure that neither $a$ nor $f(a)$ are among these discontinuity points. Then
$$\mathrm{var} (\log DM_f) 
= \sum_{i=1}^{n-1} \mathrm{var} \Big( \log \big( \tfrac{\X}{\Y \circ f^k} \big) + \log Df^k ; [p_{i-1},p_i) \Big).$$
Notice that $\log (Df^k)$ has no variation on $[p_{i-1},p_i[$, since its variation may only arise as Dirac jumps at  the points $p_i$. 
Moreover, such a point adds $| \log Df^k_+ (p_i) - \log Df^k (p_i) |$ to the variation of $\log DM_f$. All these terms 
add up to $\dist (f)$, because of (\ref{eq:K}). Hence,
\begin{eqnarray*}
\mathrm{var} (\log DM_f) 
&=& \sum_{i=1}^{n-1} \mathrm{var} \Big( \log \big(  \tfrac{\X}{\Y \circ f^k}\big) \Big) + \dist (f) \\
&=& \mathrm{var} \Big( \log \big(  \tfrac{\X}{\Y \circ f^k} \big) ; [a,f(a)]\Big) + \dist (f).
\end{eqnarray*}
Finally, on the interval 
$[a,f(a)]$, the function $\X$ is strictly increasing (equal to $\lambda x$, with $\lambda = \log Df (0) > 0$), 
while $\mathcal{Y} \circ f^k$ is strictly decreasing (equal to $ \mu (1 - f^k (x))$, with $\mu = \log Df (1) < 0$). This yields
\begin{small}\begin{eqnarray*}
\mathrm{var} \Big( \log \big( \tfrac{\X}{\Y \circ f^k} \big) ; [a,f(a)]\Big) 
&=& \big| \log ( \tfrac{\X}{\Y \circ f^k})(f(a)) - \log ( \tfrac{\X}{\Y \circ f^k}) (a) \big| \\
&=& \big| \log (e^{\lambda} \lambda a) - \log (\lambda a) + \log (\mu (1-f^k(a))) - \log (e^{\mu} \mu (1-f^k(a)) ) \big| \\
&=& \lambda - \mu.
\end{eqnarray*}\end{small}Putting everything together, we obtain the announced equality (\ref{fund-eq}).
\end{proof}

\medskip

The last special feature of $\mathrm{PL}_+^{\Delta}([0,1])$ is the well-known fact that the Mather invariant 
can never be (the class of) a rotation (this is closely related to the fact that the centralizer of an element 
$f$ in $\mathrm{PL}_+^{\Delta}([0,1])$ is a finite extension of the group generated by $f$, and it is infinite cyclic). 
Indeed, this follows from the fundamental equality (\ref{fund-eq}), which in its turn implies that 
$$\mathrm{var} (\log DM_f) \geq 2 \, \big[ | \log Df (0) | + | \log Df (1) | \big];$$
see \cite{matucci} for an alternative (less quantitative) argument.

\begin{ex} Let $f$ be the piecewise-affine homeomorphisms considered in Example 4.3 from \cite{EN}. 
It can be readily checked that $\, \dist (f) = \lambda - \mu \,$, while $\, \var(\log DM_f ) = 2 \, (\lambda - \mu). \,$ 
\end{ex}


\subsection{A remark concerning the Mather homomorphism} 

Another remarkable object introduced by Mather is a group homomorphism from 
the group of $C^{1+\mathrm{bv}}$ diffeomorphisms of a 1-manifold into $\mathbb{R}$. (The motivation was to prove the non-simplicity of such 
a group; see \cite{mather}). Although the original construction concerns diffeomorphisms of the real line with compact support, it 
also applies to the circle and the interval. To be more concrete, given a diffeomorphism $f \in \mathrm{Diff}_+^{1+\mathrm{bv}} ([0,1])$, 
we denote $\mu_f$ the (finite) signed measure induced by the Riemann-Stieltjes integration with respect to $\log (Df)$. (The fact that 
$\log (Df)$ has bounded variation implies that this integration is well defined.) The measure $\mu_f$ has a unique decomposition 
$$\mu_f = \mu_f^{ac} + \mu_f^{s},$$
where $\mu_f^{ac}$ (resp. $\mu_f^s$) is absolutely continuous (resp. totally singular) with respect to the Lebesgue measure. 
We then let
$$\phi_M (f) := \int_0^1 d \mu_f^{ac}.$$
It is straighhforward to check that $\phi_M$ defines a continuous group homomorphism from $\mathrm{Diff}^{1+\mathrm{bv}}_+([0,1])$ onto 
$\mathbb{R}$ (see \cite{mather} for further details).

\medskip

\begin{prop} If a diffeomorphism $f \in \mathrm{Diff}_+^{1+\mathrm{bv},\Delta} ([0,1])$ has parabolic fixed points and its image under 
the Mather homomorphism $\phi_M$ is nonzero, then its Mather invariant $M_f$ is nontrivial.
\end{prop}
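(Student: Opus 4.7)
The plan is to prove the contrapositive: assuming $f$ has parabolic fixed points and that $M_f$ is trivial, I will show that $\phi_M(f)=0$. So suppose $M_f$ is trivial. Since $f$ is parabolic at both endpoints, $|\log Df(0)| = |\log Df(1)| = 0$, and Corollary \ref{cor-general-trivial} immediately yields
$$\dist(f) \;=\; |\log Df(0)| + |\log Df(1)| \;=\; 0.$$

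Next I invoke the variational formula \eqref{eq-conj} (or equivalently Proposition \ref{prop-two} applied to the singleton family $\{f\}$ with $\varepsilon>0$): for every $\varepsilon>0$ there exists $h\in \mathrm{Diff}_+^{1+\mathrm{bv}}([0,1])$ such that $\var(\log D(h f h^{-1})) < \varepsilon$. I will combine this with two elementary properties of $\phi_M$. First, $\phi_M$ is conjugation-invariant since it is a homomorphism into the abelian group $(\R,+)$:
$$\phi_M(hfh^{-1}) \;=\; \phi_M(h) + \phi_M(f) + \phi_M(h^{-1}) \;=\; \phi_M(f).$$
Second, for any $g\in \mathrm{Diff}_+^{1+\mathrm{bv}}([0,1])$, the signed measure $\mu_g$ induced by $\log Dg$ satisfies $\|\mu_g^{ac}\| \le \|\mu_g\| = \var(\log Dg)$, so that
$$|\phi_M(g)| \;=\; \Bigl| \int_0^1 d\mu_g^{ac} \Bigr| \;\le\; \var(\log Dg).$$

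Putting the pieces together, for each $\varepsilon>0$ we obtain $|\phi_M(f)| = |\phi_M(hfh^{-1})| \le \var(\log D(hfh^{-1})) < \varepsilon$, so $\phi_M(f)=0$, which is the contrapositive of the statement.

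I do not anticipate a real obstacle here: the argument is essentially a direct assembly of results already established in the paper. The only thing worth emphasizing is the conceptual point that $\phi_M$, being both $C^{1+\mathrm{bv}}$-continuous and conjugacy-invariant, automatically factors through the quotient seen by the asymptotic distortion, so its vanishing on conjugacy classes with trivial Mather invariant (and parabolic endpoints) is forced by Corollary \ref{cor-general-trivial}.
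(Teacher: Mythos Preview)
Your proof is correct and follows essentially the same strategy as the paper's: argue by contrapositive, use the parabolic hypothesis together with triviality of $M_f$ to get $\dist(f)=0$, and then combine conjugacy-invariance of $\phi_M$ with the existence of conjugates of arbitrarily small variation to force $\phi_M(f)=0$. The only cosmetic difference is that the paper cites \cite{EN} for the convergence $h_n f h_n^{-1}\to\mathrm{id}$ in the $C^{1+\mathrm{bv}}$ topology and then appeals to the continuity of $\phi_M$, whereas you bypass both by using the explicit bound $|\phi_M(g)|\le\var(\log Dg)$ together with~\eqref{eq-conj}; your version is thus marginally more self-contained.
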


\noindent{\bf Proof.} If $M_f$ is trivial for $f \in \mathrm{Diff}_+^{1+\mathrm{bv},\Delta} ([0,1])$ with parabolic fixed points then, 
according to the fundamental inequality (\ref{fund-eq}), one has $\dist (f) = 0$. 
By \cite{EN}, there exists a sequence of $C^{1+\mathrm{bv}}$ diffeomorphisms $h_n$ of $[0,1]$ such that 
$h_n f h_n^{-1}$ converges to the identity in the $C^{1+\mathrm{bv}}$ topology. By the continuity of $\phi_M$, this implies that 
$\phi_M (h_n f h_n^{-1})$ converges to zero. However, since $\phi_M$ is a group homomorphism, for each $n$ we 
have $\phi_M (h_n f h_n^{-1}) = \phi_M (f)$. Therefore, if $M_f$ is trivial, then $\phi_M (f) = 0$.
$\hfill\square$

\medskip

\begin{qs}
{\em Is it possible to have $\phi_M(f) = 0 \neq \phi_M(g)$ for two $C^2$ diffeomorphisms $f$ and $g$ that are $C^1$-conjugate~? 
Compare \cite[Theorem D]{EN}, which establishes the invariance of the asymptotic variation under $C^1$ conjugacy.}
\end{qs}


\section{Appendix II: A remark concerning $C^1$ vector fields}
\label{section-sternberg}

The goal here is to give an example of a $C^{1+\mathrm{ac}}$ diffeomorphism of the interval with an hyperbolic 
fixed point which is not $C^1$ conjugate to its linear part close to that point.

Let us recall that Sternberg gave in \cite{sternberg} an example of a hyperbolic germ of $C^1$ 
diffeomorphism that is not $C^1$ (even bi-Lipschitz) linearizable, namely,
$$x \mapsto e^{\lambda} \, x \left( 1 - \frac{1}{\log (x)} \right),$$
where $\lambda < 0$. Inspired on this, and following \cite[Exercise 4.1.12]{libro}, 
let us consider a $C^1$ vector field on $[0,1)$ that vanishes only at $0$ and 
satisfies on a neighborhood of $0$ the equality
$$\mathcal{X}(x) := \lambda x \left( 1 - \frac{1}{\log(x)} \right) \esp 
\frac{\partial}{\partial x}.$$ 
It is easy to see that $\mathcal{X}$ is hyperbolic at the origin, with linear part $\lambda x \frac{\partial}{\partial x}$. 
We claim, however, that $\mathcal{X}$ is not $C^1$ linearizable. 

To show this, we first claim that if $f$ denotes the time-1 
map of the flow of  $\mathcal{X}$, then for every $x > 0$ close enough to the origin, one has
\begin{equation}
f(x) = e^{\lambda} \esp x \esp \left( \frac{1 - \log(x)}{1 - \log(f(x))} \right).
\label{relacion}
\end{equation}
Indeed, let us fix such an $x > 0$, and let us denote by $h(t)$ the solution of 
$$\frac{dh}{dt}(t) = \lambda \, h(t) \left( 1 - \frac{1}{\log(h(t))} \right), 
\quad \mbox{ with} \quad h(0) = x.$$ 
If we put $\varphi(t) := \log(h(t))$, then we have
\, $\varphi' = \lambda \left( 1 - \frac{1}{\varphi} \right),$\,
and so
$$(\varphi^{-1})'(t) = \frac{1}{\lambda(1-1/t)} = \frac{t}{\lambda(t-1)}.$$
Since $h(0) = x$, we have $\varphi (0) = \log(x)$, hence
$\varphi^{-1}(\log(x)) = 0$. Therefore,
$$\varphi^{-1}(t) = - \int_{\log(x)}^{t} \frac{s}{\lambda(1-s)} 
\esp ds = \frac{t}{\lambda} - \frac{\log(x)}{\lambda} + 
\frac{1}{\lambda} \log \left( \frac{1-t}{1-\log(x)} \right).$$
Since $\varphi^{-1}(\log(h(t))) = t$, this gives
$$t = \frac{\log(h(t))}{\lambda} - \frac{\log(x)}{\lambda} + 
\frac{1}{\lambda}\log \left( \frac{1 - \log(h(t)) }{1 - \log(x)} \right),$$
and so
$$\log(h(t)) = \lambda t + \log(x) -  
\log \left( \frac{1 - \log(h(t))}{1 - \log(x)} \right).$$
Since $h(1) = f(x)$, we have
$$\log(f(x)) = \lambda + \log(x) - 
\log \left( \frac{1 - \log(f(x))}{1 - \log(x)} \right),$$
which proves (\ref{relacion}).

Let us now suppose by contradiction that $\mathcal{X}$ is $C^1$ conjugate to its linear part. If it was, then $f$ would also 
be $C^1$ conjugate to its linear part. However, as we next show, this is not the case. 
Indeed, from (\ref{relacion}), one easily concludes that
\begin{eqnarray*}
\frac{f^k (x)}{e^{\lambda k} x}  
&=& \frac{f(x)}{e^{\lambda} x} \cdot \frac{f^2(x)}{e^{\lambda} f(x)} \ldots 
\frac{f^k(x)}{e^{\lambda} f^{k-1}(x)}\\
&=& \frac{1 - \log(x)}{1 - \log(f(x))} \cdot \frac{1 - \log(f(x))}{1 - \log(f^2(x))} 
\cdots \frac{1 - \log(f^{k-1}(x))}{1 - \log(f^{k}(x))} 
\,\,\,=\,\,\, \frac{1 - \log(x)}{1 - \log(f^k(x))}.
\end{eqnarray*}
The right-hand-side expression converges to zero as $k$ goes to infinity (since $f^k(x)$ converges to the origin). However, if $f$ 
was conjugated to $x \mapsto e^{\lambda} x$ by some bi-Lipschitz homeomorphism $\phi$ with bi-Lipschitz constant $M$, then from 
$f^k(x) = \phi (e^{\lambda k} \phi^{-1}(x))$ one would obtain a.e. close to the origin:
\, $Df^k \geq e^{\lambda k} / M^2.$ \,
By integration, this would yield 
$$f^k (x) \geq \frac{e^{\lambda k}x}{M^2},$$
which contradicts the convergence of \, $f^k (x) / e^{\lambda k} x$ \, to zero.

To close this discussion, we claim that $f$ can be explicitly conjugated to its linear part by a non bi-Lipschitz map. Indeed, 
letting $\Phi$ be so that $\Phi (x) := x \, (1-\log(x) )$ close to the origin (and extending it in an equivariant way), equality 
(\ref{relacion}) may be read as \, $\Phi (f(x)) = e^{\lambda} \, \Phi(x),$ \, 
which is the announced conjugacy relation. Notice that \, $D \Phi (x) = -\log (x).$ \,
Using this relation (or by a direct analysis), a straightforward computation shows that 
$f$ is of class $C^{1+\mathrm{ac}}$. It is 
worth to stress that $f$ is not of class $C^{1+\alpha}$ for any $\alpha > 0$, because the Sternberg-Yoccoz linearization 
theorem still holds in this setting \cite{MW}.
\vspace{0.5cm}


\noindent{\bf Acknowledgments.}  Both authors were funded by the ANR Project GROMEOV, and 
strongly thank the organizers of the Workshop ``Ordered groups and rigidity in dynamics and topology'' 
at BIRS /  Casa Matem\'atica de Oaxaca (June 2019), where the main result of this article was obtained. 
A. Navas was also funded by the FONDECYT project 1200114; he would also like to acknowledge 
support from UNAM via the project FORDECYT 265667 and the PREI of the DGAPA.

Conversations with Christian Bonatti, Bassam Fayad, \'Etienne Ghys, Alejandro Kocsard  
and Raphael Krikorian were very useful in the preparation of this paper.

\vspace{0.5cm}


\begin{footnotesize}

\vspace{0.42cm}

\noindent {\bf H\'el\`ene Eynard-Bontemps} \hfill{\bf Andr\'es Navas}

\noindent Institut Fourier \hfill{ Mathematics and Computer Science Dept.}

\noindent  Universit\'e Grenoble Alpes \hfill{ University of Santiago of  Chile}

\noindent 100 rue des Math\'ematiques \hfill{Alameda Bernardo O'Higgins 3363}

\noindent 38610 Gi\`eres, France \hfill{Estaci\'on Central, Santiago, Chile} 

\noindent helene.eynard-bontemps@univ-grenoble-alpes.fr \hfill{andres.navas@usach.cl}

\end{footnotesize}

\end{document}